\def\CP{{\mathbb C \mathbb P}}
\def\C{{\mathbb C}}
\def\T{{\mathbb T}}
\def\J{{\mathcal J}}
\def\R{\mathbb{R}}
\def\C{\mathbb{C}}
\def\G{\mathbb{G}}
\def\N{\mathbb{N}}
\def\T{\mathbb{T}}
\def\TM{\mathbb{T}M}
\def\Z{\mathbb{Z}}
\def\top{\mbox{top}}
\def\Im{{\rm Im\/}}
\newtheorem{theorem}{Theorem}[section]
\newtheorem{lemma}[theorem]{Lemma}
\newtheorem{definition}[theorem]{Definition}
\newtheorem{example}[theorem]{Example}
\newtheorem{proposition}[theorem]{Proposition}
\newtheorem{corollary}[theorem]{Corollary}
\newenvironment{acknowledgement}[1][Acknowledgements]
{\begin{trivlist} \item[\hskip \labelsep {\bfseries #1}]}
{\end{trivlist}}
\newcommand{\pd}[2]{\dfrac{\partial#1}{\partial #2}}
\newcommand{\dif}[1]{{\rm d}#1}
\newcommand{\cyc}[1]{\left<#1\right>}
\newcommand{\Set}[2]{\left\{#1\bigg\vert #2\right\}}
\title{Aeppli and Bott-Chern cohomology for bi-generalized Hermitian manifolds and $d'd''$-lemma}
\author{Tai-Wei Chen}
\address{Mathematics Department\\  National Tsing Hua University\\ Hsinchu, Taiwan}
\email{d937203@oz.nthu.edu.tw}
\author{Chung-I Ho}
\address{Mathematics Department\\  National Tsing Hua University\\  National Center of Theoretical Sciences\\ Mathematical Division\\ Hsinchu, Taiwan}
\email{ciho@math.cts.nthu.edu.tw}
\author{Jyh-Haur Teh}
\address{Mathematics Department\\  National Tsing Hua University\\ Hsinchu, Taiwan}
\email{jyhhaur@math.nthu.edu.tw}
\date{}
\begin{document}

\begin{abstract}
We define Aeppli and Bott-Chern cohomology for bi-generalized complex manifolds and show that they are finite dimensional for compact bi-generalized Hermitian manifolds. For totally bounded double complexes $(A, d', d'')$, we show that
the validity of $d'd''$-lemma is equivalent to having the same dimension of several cohomology groups. Some calculations
of Bott-Chern cohomology groups of some bi-generalized Hermitian manifolds are given.
\\
\\
Keywords: Generalized complex manifolds, Bott-Chern cohomology, $\partial\overline{\partial}$-lemma.
\end{abstract}

\subjclass[2010]{53C55, 55N35.}

\maketitle

\section{Introduction}
Generalized complex geometry is a framework that unifies complex and symplectic geometry. This theory was proposed by Hitchin in \cite{H}, and further developed by his students Gualtieri and Cavancanti \cite{G1, C}.
String theorists are interested in this theory as it arises naturally in compactifying type II theories. As indicated in \cite{GMPT}, on a six dimensional internal manifold $M$, the structure group
of $T(M)\oplus T^*(M)$ being $SU(3)\times SU(3)$ implies the existence of two compatible generalized almost complex structures $(\J_1, \J_2)$ on $M$ of which $\J_1$ is integrable while the integrability of $\J_2$
fails in the presence of a RR flux. Tseng-Yau \cite{TY1} mimicked the case of solving Maxwell equations on some four-manifolds showing that the Bott-Chern and Aeppli cohomology for generalized complex manifolds
can be used to count massless fields for a general supersymmetric Minkowski type II compactification with RR flux. To avoid too much technical difficulties, the natural first step towards an understanding of the geometry
of $M$ is not to consider the presence of RR fluxes. In this case we have two integrable generalized complex structures $\J_1, \J_2$ and the usual exterior derivative $\dif$ has a decomposition $\dif=\delta_++\delta_-+\overline{\delta}_++\overline{\delta}_-$. We are particular interested in the Bott-Chern and Aeppli cohomology defined by $\delta_+$ and $\delta_-$.

To build a general mathematical framework for manifolds with two generalized complex structures, we consider bi-generalized Hermitian manifolds $(M, \J_1, \J_2, \G)$ where $\J_1, \J_2$ are compatible generalized complex structures and $\G$ is a generalized metric that commutes with $\J_1, \J_2$. As a special case when $\G=-\J_1\J_2$, $M$ becomes a generalized K\"ahler manifold (\cite{G1, G2}).
The first important problem is the finiteness of dimensions of the Bott-Chern and Aeppli cohomology groups of $M$. This is proved by using elliptic operator theory and methods from Schweitzer's (\cite{S}).

Bott-Chern cohomology of complex manifolds plays an important role especially when the manifolds do not satisfy the $\partial\overline{\partial}$-lemma. In this case, Bott-Chern cohomology may be different
from Dolbeault cohomology. As pointed out by Tseng and Yau in \cite{TY2}, Bott-Chern cohomology gives more information for non-K\"ahler manifolds that do not satisfy $\partial\overline{\partial}$-lemma.
Hence it is important to understand $\delta_+\delta_-$-lemma and its consequences. On a complex manifold $M$, the Hodge-de Rham spectral sequence $E_*^{*,*}$ is built from the double complex $(\Omega^*(M),\partial,\bar{\partial})$ which relates the Dolbeault cohomology of $M$ to the de Rham cohomology of $M$. It is well known that $E^{p,q}_1$ is isomorphic to $H^p(M,\Omega^q)$ and the spectral sequence $E^{*,*}_r$ converges to $H^*(M,\C)$.
We first give a purely algebraic description of the $\partial\overline{\partial}$-lemma of a double complex in the frame of spectral sequences and then show that $\partial\overline{\partial}$-lemma is equivalent to the equalities
of several cohomologies, in particular, Bott-Chern cohomology and
Dolbeault cohomology when they are finite dimensional. This largely generalizes the result obtained by Angella and Tomassini in \cite{AT}.

This paper is organized as follows: In section 2, we show that on a compact bi-generalized Hermitian manifold, several Bott-Chern and Aeppli cohomologies are finite dimensional by using the theory of
elliptic operators and prove a Serre duality for Bott-Chern cohomologies of compact generalized K\"ahler manifolds. In section 3, we show that the $\dif'\dif''$-lemma condition is equivalent to having the same dimension of several cohomology groups when one of these groups is finite dimensional, in particular, the equivalence between Bott-Chern cohomology and de Rham cohomology. Applying this result to compact generalized K\"ahler manifolds, we are able to show that our generalized Bott-Chern cohomology groups are isomorphic to
$\delta_+$-cohomology and $\delta_-$-cohomology groups.
In section 4, we compute the generalized Bott-Chern cohomology groups of $\R^2, \R^4, \T^2, \T^4$ with generalized complex structures induced from non-compatible complex and symplectic structures.

\begin{acknowledgement}
The authors  thanks the Taiwan National Center for
Theoretical Sciences (Hsinchu) for providing a wonderful working
environment. The third author thanks Gil Cavalcanti for pointing him to reference
\cite{BCG}.
\end{acknowledgement}

\section{Bott-Chern and Aeppli cohomology for bi-generalized complex manifolds}
We refer the reader to \cite{G1, Ca07, H} for the basic of generalized complex geometry. We give a brief recall of some terminologies here.
On a smooth manifold $M$, a generalized metric is an orthogonal, self-adjoint operator $\G:\TM \rightarrow \TM$ on the generalized tangent space $\TM:=TM\oplus T^*M$ such that
$\cyc{\G e, e}>0$ for $e\in \TM\backslash \{0\}$ where $\cyc{\ , }$ is the natural pairing defined by $\cyc{X+\xi, Y+\eta}=\frac{1}{2}(\xi(Y)+\eta(X))$ for $X, Y\in TM, \xi, \eta\in T^*M$.

\begin{definition}
A bi-generalized complex structure on a smooth manifold $M$ is a pair $(\J_1, \J_2)$ where $\J_1, \J_2$ are commuting generalized complex structures on $M$. A
bi-generalized complex manifold is a smooth manifold $M$ with a bi-generalized complex structure. A bi-generalized Hermitian manifold $(M, \J_1, \J_2, \G)$
is an oriented bi-generalized complex manifold $(M, \J_1, \J_2)$ with a generalized metric $\G$ which commutes with $\J_1$ and $\J_2$.
\end{definition}

In particular, any generalized K\"ahler manifold is also a bi-generalized Hermitian manifold.

\begin{lemma}\label{decompose}
Let $V$ be a vector space, finite or infinite dimensional over $\C$, and $L_1, L_2:V \rightarrow V$ be two linear transformations. Suppose that $V=\bigoplus^n_{p=1}V^p_1=\bigoplus^m_{q=1}V^q_2$ where
$V^p_1, V^q_2$ are some eigenspaces of $L_1$ and $L_2$ respectively. If $L_1$ and $L_2$ commute, then
$$V=\bigoplus^{n, m}_{p=1, q=1} V^{p, q}$$ where $V^{p,q}=V^p_1\cap V^q_2$.
\end{lemma}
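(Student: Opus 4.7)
The plan is to exploit the fact that two commuting linear maps preserve each other's eigenspaces, so the second decomposition can be applied inside each piece of the first one. Infinite-dimensionality of $V$ is not an obstacle because the two decompositions are hypothesized, not to be constructed, and because the two indexing sets are finite.

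First I would fix eigenvalues $\lambda_p$ and $\mu_q$ so that $L_1 v = \lambda_p v$ for $v \in V_1^p$ and $L_2 w = \mu_q w$ for $w \in V_2^q$ (one may assume the $\lambda_p$ are pairwise distinct and likewise the $\mu_q$, merging eigenspaces if necessary). The key observation is that $L_1$ preserves each $V_2^q$: if $w \in V_2^q$ then $L_2(L_1 w) = L_1(L_2 w) = \mu_q L_1 w$, so $L_1 w \in V_2^q$. Symmetrically $L_2$ preserves each $V_1^p$.

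Next, given $v \in V$, I would use the two given decompositions successively. Write $v = \sum_p v_p$ with $v_p \in V_1^p$ uniquely, and then $v_p = \sum_q v_p^q$ with $v_p^q \in V_2^q$ uniquely, so that $v = \sum_{p,q} v_p^q$. The point is to check that each $v_p^q$ actually lies in $V^{p,q} = V_1^p \cap V_2^q$. By construction $v_p^q \in V_2^q$; for membership in $V_1^p$ I would apply $L_1$ and use that $L_1$ preserves $V_2^q$, giving
\[
\sum_q \lambda_p v_p^q \;=\; \lambda_p v_p \;=\; L_1 v_p \;=\; \sum_q L_1 v_p^q,
\]
with each $L_1 v_p^q \in V_2^q$. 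By uniqueness of the decomposition $V = \bigoplus_q V_2^q$, I conclude $L_1 v_p^q = \lambda_p v_p^q$, hence $v_p^q \in V_1^p$.

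For uniqueness of the bi-decomposition, if $\sum_{p,q} w^{p,q} = 0$ with $w^{p,q} \in V^{p,q}$, I would group by $p$: since $\sum_q w^{p,q} \in V_1^p$ and $V = \bigoplus_p V_1^p$, each inner sum vanishes, and then uniqueness in $V = \bigoplus_q V_2^q$ forces each $w^{p,q} = 0$. I do not anticipate any real obstacle here; the only subtle point is recognising that the commutation hypothesis transfers the second decomposition inside each $V_1^p$ without needing any spectral theory beyond the already-given eigenspace decompositions.
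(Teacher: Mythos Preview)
Your proof is correct and follows the same overall plan as the paper: use commutation to show that each operator preserves the other's eigenspaces, then check that the components of an element of $V_1^p$ in the $V_2^q$-decomposition remain in $V_1^p$. The difference is in how this last step is carried out. The paper observes that $L_2$ preserves $V_1^p$, so for $u\in V_1^p$ with $u=\sum_q v_q$ one has $L_2^r u=\sum_q \beta_q^{\,r} v_q\in V_1^p$ for $r=0,\dots,m-1$; inverting the Vandermonde system then writes each $v_q$ as a linear combination of $u, L_2 u,\dots,L_2^{m-1}u$, forcing $v_q\in V_1^p$. You instead use that $L_1$ preserves each $V_2^q$ and compare components of $L_1 v_p=\sum_q L_1 v_p^q$ with $\lambda_p\sum_q v_p^q$ directly via uniqueness of the $V_2^q$-decomposition. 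Your route is shorter and avoids the Vandermonde inversion; the paper's route has the mild advantage of making explicit that each $v_q$ is a polynomial in $L_2$ applied to $u$, but for the purpose of this lemma your argument is cleaner.
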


\begin{proof}
Let $\alpha_p, \beta_q$ be eigenvalues of $L_1, L_2$ with eigenspace $V^p_1, V_2^q$ respectively. For $u\in V^p_1$, $L_1(L_2(u))=L_2(L_1(u))=L_2(\alpha_p u)=\alpha_p L_2(u)$ which implies
that $L_2(u)\in V^p_1$. If $u=\sum^m_{q=1}v_q$ where $v_q\in V^q_2$, then we have $L^r_2u=\sum^m_{q=1}\beta^r_qv_q$ for any nonnegative integer $r$. Let
$A=[a_{ij}]$ be the Vandermonde matrix where $a_{ij}=\beta_j^{i-1}$ for $1\leq i, j\leq m$. We see that $[v_1 \ \ v_2 \ \  \cdots \ \ v_m]^t=A^{-1}[u \
\ L_2u \ \ \cdots \ \ L_2^{m-1}u]^t$ and hence each $v_q\in V^p_1$. This implies that $V^p_1=\bigoplus^m_{q=1}V^p_1\cap V^q_2$.
\end{proof}

\begin{definition}
Given a bi-generalized complex manifold $(M, \J_1, \J_2)$, we define
$$U^{p, q}:=U^p_1\cap U^q_2$$ where $U^p_1, U^q_2\subset \Gamma(\Lambda^*\TM\otimes \C)$ are eigenspaces of $\J_1, \J_2$ associated to the eigenvalues $ip$ and $iq$ respectively. By the lemma above and the fact that $\J_1, \J_2$ are generalized complex structures, the exterior derivative $\dif$ is an operator from
$U^{p, q}$ to $U^{p+1, q+1}\oplus U^{p+1, q-1}\oplus U^{p-1, q+1} \oplus U^{p-1, q-1}$. We write
$$\delta_+:U^{p, q} \rightarrow U^{p+1, q+1}, \delta_-:U^{p, q} \rightarrow U^{p+1, q-1}$$ and
$$\overline{\delta}_+:U^{p, q} \rightarrow U^{p-1, q-1}, \overline{\delta}_-:U^{p, q} \rightarrow U^{p-1, q+1}$$
by projecting $\dif$ into the corresponding spaces.
\end{definition}

Recall that for a generalized complex manifold $(M, \J)$, there is a decomposition $\dif=\partial+\overline{\partial}$ where $\partial:U^p \rightarrow U^{p+1}, \overline{\partial}:U^p \rightarrow U^{p-1}$
are projections of $\dif$. On a bi-generalized complex manifold $(M, \J_1, \J_2)$, we have a finer decomposition $\partial_1=\delta_++\delta_-, \overline{\partial}_1=\overline{\delta}_++\overline{\delta}_-, \partial_2=\delta_++\overline{\delta}_-,
\overline{\partial}_2=\overline{\delta}_++\delta_-$ where $(\partial_1, \overline{\partial}_1), (\partial_2, \overline{\partial}_2)$ are the decompositions of $\dif$ with respect to $\J_1, \J_2$ respectively.

\begin{definition}
Let $M$ be a real manifold of dimension $2n$ and $\alpha=\sum^{2n}_{a=0}\alpha^a, \beta=\sum^{2n}_{b=0}\beta^b$ be two complex forms on $M$ where $\alpha^a, \beta^b$
are degree $a$ and degree $b$ components of $\alpha$ and $\beta$ respectively. Define
$$\sigma(\alpha^a)=\left\{
                     \begin{array}{ll}
                       (-1)^{\frac{a}{2}}\alpha^a, & \hbox{ if } a \mbox{ is even}\\
                       (-1)^{\frac{a-1}{2}}\alpha^a, & \hbox{ if } a \mbox{ is odd}
                     \end{array}
                   \right.
$$
and write $(\alpha)_{\top}$ for the degree $2n$ component of $\alpha$. The Chevalley pairing is defined to be
$$(\alpha, \beta)_{Ch}:=-\sum^n_{j=0}(-1)^j\left(\alpha^{2j}\wedge \beta^{2n-2j}+\alpha^{2j+1}\wedge \beta^{2n-2j-1}\right)$$
\end{definition}
We have $(\alpha, \beta)_{Ch}=-(\sigma(\alpha)\wedge \beta)_{\top}$. The following result is a direct calculation.

\begin{lemma}\label{exact}
For any two complex forms $\alpha, \beta$ on a generalized complex manifold $(M, \J)$ of real dimension $2n$, we have
$$(\dif\alpha, \beta)_{Ch}+(\alpha, d\beta)_{Ch}=(\dif(-\widetilde{\sigma}(\alpha)\wedge \beta))_{\top}$$ where
$$\widetilde{\sigma}(\alpha^a)=\left\{
                     \begin{array}{ll}
                       (-1)^{\frac{a}{2}}\alpha^a, & \hbox{ if } a \mbox{ is even}\\
                       (-1)^{\frac{a+1}{2}}\alpha^a, & \hbox{ if } a \mbox{ is odd}
                     \end{array}
                   \right.
$$
\end{lemma}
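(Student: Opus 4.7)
The statement does not actually invoke the generalized complex structure $\J$: both sides use only $d$, $\wedge$, and the signs attached to the pure degrees. So the plan is purely combinatorial, reducing to the top-degree Leibniz identity and a careful bookkeeping of the $\sigma,\widetilde\sigma$ signs.

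First I would reduce to homogeneous components. By bilinearity of both sides in $\alpha$ and $\beta$, and the fact that $(\,\cdot\,)_{\top}$ picks out only the degree-$2n$ piece, it is enough to fix $\alpha=\alpha^a$ and $\beta=\beta^b$ with $a+b=2n-1$ (every other degree combination makes all three terms vanish). Using the identity $(\mu,\nu)_{Ch}=-(\sigma(\mu)\wedge\nu)_{\top}$ that the authors record just before the lemma, the left-hand side becomes
\[
(d\alpha^a,\beta^b)_{Ch}+(\alpha^a,d\beta^b)_{Ch}=-\bigl(\sigma(d\alpha^a)\wedge\beta^b\bigr)_{\top}-\bigl(\sigma(\alpha^a)\wedge d\beta^b\bigr)_{\top},
\]
while the right-hand side, after an application of the ordinary Leibniz rule $d(\omega\wedge\eta)=d\omega\wedge\eta+(-1)^{|\omega|}\omega\wedge d\eta$, becomes
\[
\bigl(d(-\widetilde\sigma(\alpha^a)\wedge\beta^b)\bigr)_{\top}=-\bigl(d\widetilde\sigma(\alpha^a)\wedge\beta^b\bigr)_{\top}+(-1)^{a+1}\bigl(\widetilde\sigma(\alpha^a)\wedge d\beta^b\bigr)_{\top}.
\]

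Matching the two expressions term by term, the proof now reduces to the two sign identities
\[
\sigma(d\alpha^a)=d\widetilde\sigma(\alpha^a), \qquad \sigma(\alpha^a)=(-1)^a\,\widetilde\sigma(\alpha^a).
\]
I would verify these by splitting into the even and odd cases for $a$. For the second identity, when $a$ is even both $\sigma$ and $\widetilde\sigma$ reduce to multiplication by $(-1)^{a/2}$, matching $(-1)^a=1$; when $a$ is odd, $\sigma$ and $\widetilde\sigma$ differ by a sign since $(-1)^{(a-1)/2}=-(-1)^{(a+1)/2}$, matching $(-1)^a=-1$. For the first identity, $d\alpha^a$ has degree $a+1$ of opposite parity from $a$, so comparing $\sigma$ applied in degree $a+1$ with $\widetilde\sigma$ applied in degree $a$ produces the same exponent $\lfloor a/2\rfloor$ or $(a+1)/2$ in both cases; an even/odd case check closes it.

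The only real obstacle is keeping the signs straight, in particular remembering that $\sigma$ and $\widetilde\sigma$ are designed precisely so that $\sigma$ intertwines with $d$ in the shifted form $\sigma\circ d=d\circ\widetilde\sigma$ on homogeneous pieces; once this intertwining relation and the pointwise identity $\sigma=(-1)^{|\cdot|}\widetilde\sigma$ are in hand, the two computations above coincide term by term and the lemma follows.
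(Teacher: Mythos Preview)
Your proposal is correct and is precisely the ``direct calculation'' the paper alludes to; the paper does not spell out a proof beyond that phrase, and your reduction to homogeneous degrees with $a+b=2n-1$ together with the two sign identities $\sigma(d\alpha^a)=d\widetilde\sigma(\alpha^a)$ and $\sigma(\alpha^a)=(-1)^a\widetilde\sigma(\alpha^a)$ is exactly what that calculation amounts to.
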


Recall that a generalized Hermitian manifold $(M, \J, \G)$ is an oriented generalized complex manifold $(M, \J)$ with a compatible generalized metric $\G$ (see \cite{BCG, Ca07}).
The generalized tangent space $\TM$ is split into $\pm1$ eigenbundles $V_{\pm}$ of $\G$. The orientation of $M$ induces an orientation of $V_+$. For $x\in M$ and a positive normal basis $\{e_1, \cdots, e_{2n}\}$ of $V_{+, x}$, let
$$\star:=-e_{2n}\cdots e_1\in CL(\T_xM)$$
The generalized Hodge star operator $\star:\Lambda^{\bullet}T^*_xM \rightarrow \Lambda^{\bullet}T^*_xM$ is defined by the Clifford action
$$\star \alpha:=\star\cdot \alpha$$
on spinors. This can be extended to a $\C$-linear map $\star:\Lambda^{\bullet}T^*M\otimes \C \rightarrow \Lambda^{\bullet}T^*M\otimes \C$. We write
$$\overline{\star}\alpha:=\star\overline{\alpha}$$
where $\overline{\alpha}$ is the complex conjugation of $\alpha\in \Lambda^{\bullet}T^*M\otimes \C$.

For $p\in \Z$ and $\alpha, \beta\in U^p$, the generalized Hodge inner product is defined to be
$$h(\alpha, \beta):=\int_M(\alpha, \overline{\star}\beta)_{Ch}$$ which is a positive definite symmetric bilinear form on $U^p$.
If without mentioned explicitly, we write $\delta^*$ for the $h$-adjoint of an operator $\delta$.

\begin{proposition}\label{adjoint}
Let $M$ be a compact generalized Hermitian manifold. Then
$\partial^*=-\overline{\star}^{-1}\partial \overline{\star}$ and $\overline{\partial}^*=-\overline{\star}^{-1}\overline{\partial}\overline{\star}$.
\end{proposition}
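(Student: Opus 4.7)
The plan is to perform integration by parts via Lemma \ref{exact}, then exploit the bi-grading compatibilities of the Chevalley pairing and the generalized Hodge star to isolate the $\partial$ (resp.\ $\overline{\partial}$) piece from $\dif$.

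First, since $M$ is compact, $\int_M(\dif\gamma)_{\top}=\int_M\dif\gamma=0$ by Stokes, so integrating the identity of Lemma \ref{exact} over $M$ yields the ``integration by parts'' identity
$$\int_M(\dif\alpha,\beta)_{Ch}+\int_M(\alpha,\dif\beta)_{Ch}=0$$
for any complex forms $\alpha,\beta$ on $M$. This will be the engine of the proof.

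Next I record two grading facts that I will use repeatedly. (a) The Chevalley pairing satisfies $(U^p,U^q)_{Ch}=0$ pointwise whenever $p+q\neq 0$: the generalized complex structure $\J$ acts as a derivation on the pairing and has eigenvalues $ip$ on $U^p$, so $(ip+iq)(\alpha,\beta)_{Ch}=0$. (b) The operator $\overline{\star}$ sends $U^p$ into $U^{-p}$: complex conjugation swaps $U^p$ and $U^{-p}$ (as $\J$ is real), and $\star$, being built from the generalized metric $\G$ that commutes with $\J$, preserves the $U^p$-decomposition. Fact (b) is consistent with the already-asserted positive definiteness of $h$ on $U^p$.

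Now fix $\alpha\in U^p$ and $\beta\in U^{p+1}$, which is the only bi-degree in which $h(\partial\alpha,\beta)$ can be nonzero. Then $\partial\alpha\in U^{p+1}$, $\overline{\partial}\alpha\in U^{p-1}$, and $\overline{\star}\beta\in U^{-(p+1)}$. By (a), $(\overline{\partial}\alpha,\overline{\star}\beta)_{Ch}=0$, so $(\dif\alpha,\overline{\star}\beta)_{Ch}=(\partial\alpha,\overline{\star}\beta)_{Ch}$. Similarly, writing $\dif\overline{\star}\beta=\partial\overline{\star}\beta+\overline{\partial}\overline{\star}\beta\in U^{-p}\oplus U^{-p-2}$, only the first summand pairs non-trivially with $\alpha\in U^p$, giving $(\alpha,\dif\overline{\star}\beta)_{Ch}=(\alpha,\partial\overline{\star}\beta)_{Ch}$. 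Substituting into the integration by parts identity,
$$h(\partial\alpha,\beta)=\int_M(\partial\alpha,\overline{\star}\beta)_{Ch}=-\int_M(\alpha,\partial\overline{\star}\beta)_{Ch}=h\bigl(\alpha,-\overline{\star}^{-1}\partial\overline{\star}\beta\bigr),$$
yielding $\partial^*=-\overline{\star}^{-1}\partial\overline{\star}$. The formula for $\overline{\partial}^*$ is obtained by the same argument, taking instead $\beta\in U^{p-1}$ so that the $U^p$-grading kills the $\partial$ piece of $\dif\alpha$ and the $\overline{\partial}$ piece of $\dif\overline{\star}\beta$.

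The main obstacle is verifying fact (b), i.e.\ that $\star$ preserves the $U^p$-grading; this is precisely where the compatibility $[\G,\J]=0$ enters, and without it the two pieces of $\dif$ would not decouple cleanly in the computation. Once (a) and (b) are in hand, the rest is a formal analogue of the classical Hermitian identity $\overline{\partial}^*=-\overline{\star}\,\overline{\partial}\,\overline{\star}$, with the $U^p$-grading of generalized complex geometry playing the role of the $(p,q)$-bigrading.
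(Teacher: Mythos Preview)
Your proof is correct and follows essentially the same route as the paper: Lemma \ref{exact} plus Stokes for the integration-by-parts identity, then the $U^p$-grading of the Chevalley pairing to kill the $\overline{\partial}$ contributions. The paper's ``by comparing degrees'' is exactly your fact (a), and your fact (b) is what the paper uses silently when it replaces $\beta\in U^{-k}$ by $\overline{\star}\beta$ with $\beta\in U^k$; you are simply more explicit about both.
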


\begin{proof}
By Lemma \ref{exact}, we have $(\dif\alpha, \beta)_{Ch}+(\alpha, \dif\beta)_{Ch}=(\dif(-\widetilde{\sigma}(\alpha)\wedge \beta))_{\top}$ for any two complex forms $\alpha, \beta$ on $M$.
For $\alpha\in U^{k-1}, \beta\in U^{-k}$, by comparing degrees, we see that both $(\overline{\partial}\alpha, \beta)_{Ch}$ and $(\alpha, \overline{\partial} \beta)_{Ch}$ vanish and hence $(d(-\widetilde{\sigma}(\alpha)\wedge \beta))_{\mbox{top}}=(\partial \alpha, \beta)_{Ch}+(\alpha, \partial \beta)_{Ch}$. Therefore, by Stokes theorem,
$$h(\partial \alpha, \beta)=\int_M(\partial \alpha, \overline{\star}\beta)_{Ch}=-\int_M(\alpha, \partial\overline{\star}\beta)_{Ch}=h(\alpha, -\overline{\star}^{-1}\partial \overline{\star} \beta)$$
which implies that $\partial^*=-\overline{\star}\partial \overline{\star}$. Similarly we have the result for $\overline{\partial}^*$.
\end{proof}

\begin{proposition}\label{adjoint and commute}
Let $M$ be a compact bi-generalized Hermitian manifold. Then
\begin{enumerate}
\item $\overline{U^{p, q}}=U^{-p, -q}$ for any $p, q$.

\item $\delta_+^*=-\overline{\star}^{-1}\delta_+ \overline{\star}$ and $\delta_-^*=-\overline{\star}^{-1}\delta_-\overline{\star}$.

\item $\overline{\delta_+\alpha}=\overline{\delta}_+(\overline{\alpha}), \overline{\delta_-\alpha}=\overline{\delta}_-(\overline{\alpha}),
\overline{\overline{\delta}_+\alpha}=\delta_+(\overline{\alpha}), \overline{\overline{\delta}_-\alpha}=\delta_-(\overline{\alpha})$.

\item $\delta_+\delta_-=-\delta_-\delta_+, \delta_+\overline{\delta}_-=-\overline{\delta}_-\delta_+,
\overline{\delta}_+\overline{\delta}_-=-\overline{\delta}_-\overline{\delta}_+, \overline{\delta}_+\delta_-=-\delta_-\overline{\delta}_+$.

\item $\delta_+\overline{\delta}_++\overline{\delta}_+\delta_++\delta_-\overline{\delta}_-+\overline{\delta}_-\delta_-=0$.
\end{enumerate}
\end{proposition}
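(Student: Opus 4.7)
My plan is to handle each of the five items in turn, leveraging Lemma~\ref{decompose}, Lemma~\ref{exact}, Proposition~\ref{adjoint}, and the integrability identity $\dif^2=0$.

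For (1), since $\J_1$ and $\J_2$ are $\R$-linear operators on $\TM\otimes\C$, complex conjugation swaps the $ip$- and $-ip$-eigenspaces of each $\J_j$. Hence $\overline{U^p_1}=U^{-p}_1$ and $\overline{U^q_2}=U^{-q}_2$, and intersecting yields $\overline{U^{p,q}}=U^{-p,-q}$.

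For (2), the crucial preliminary step is to show $\bar\star\colon U^{p,q}\to U^{-p,-q}$. This should follow because $\G$ commutes with both $\J_1$ and $\J_2$: the generalized Hodge star $\star$ is built from the orientation of $V_+\subset\TM$, which is preserved by each $\J_j$, so $\star$ acts on each $U^{p,q}$, while complex conjugation flips both signs by~(1). Granted this, applying Proposition~\ref{adjoint} to the two generalized Hermitian structures $(M,\J_1,\G)$ and $(M,\J_2,\G)$ yields $\partial_j^*=-\bar\star^{-1}\partial_j\bar\star$ and $\bar\partial_j^*=-\bar\star^{-1}\bar\partial_j\bar\star$ for $j=1,2$. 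For $\alpha\in U^{p,q}$ the components $\delta_+\alpha$ and $\delta_-\alpha$ of $\partial_1\alpha$ sit in the disjoint pieces $U^{p+1,q+1}$ and $U^{p+1,q-1}$, so their adjoints land in $U^{p-1,q-1}$ and $U^{p-1,q+1}$ respectively. Because $\bar\star$ respects the bi-grading up to the sign flip, the right-hand side $-\bar\star^{-1}\partial_1\bar\star$ splits identically as $-\bar\star^{-1}\delta_+\bar\star-\bar\star^{-1}\delta_-\bar\star$ across the same two pieces, and matching summands gives the desired formulas.

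For (3), $\dif$ is a real operator, so $\overline{\dif\alpha}=\dif\bar\alpha$ for every complex form $\alpha$. Expanding both sides via $\dif=\delta_++\delta_-+\bar\delta_++\bar\delta_-$ and invoking~(1) places the four summands on each side in the four distinct pieces $U^{-p\mp1,-q\mp1}$. Matching bi-graded components—for instance, $\overline{\delta_+\alpha}$ and $\bar\delta_+\bar\alpha$ both live in $U^{-p-1,-q-1}$—produces the four identities.

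For (4), apply $\partial_j^2=0=\bar\partial_j^2$ for $j=1,2$, which encodes the integrability of $\J_1$ and $\J_2$. The expansion $\partial_1^2=\delta_+^2+(\delta_+\delta_-+\delta_-\delta_+)+\delta_-^2$ distributes its three summands among the disjoint pieces $U^{p+2,q+2}$, $U^{p+2,q}$, $U^{p+2,q-2}$, so the middle one alone gives $\delta_+\delta_-+\delta_-\delta_+=0$. The analogous expansions of $\partial_2^2$, $\bar\partial_1^2$, and $\bar\partial_2^2$ yield the other three anti-commutations.

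For (5), $\dif^2=0$ together with $\partial_1^2=\bar\partial_1^2=0$ forces $\partial_1\bar\partial_1+\bar\partial_1\partial_1=0$. Expanding with $\partial_1=\delta_++\delta_-$ and $\bar\partial_1=\bar\delta_++\bar\delta_-$ and using~(4) to kill the four cross terms $\delta_+\bar\delta_-+\bar\delta_-\delta_+$ and $\delta_-\bar\delta_++\bar\delta_+\delta_-$ leaves exactly the claimed identity.

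The main obstacle is step~(2): the bi-grading behavior $\bar\star\colon U^{p,q}\to U^{-p,-q}$ is the only piece not already contained in the single-structure setting of Proposition~\ref{adjoint}, and it requires carefully tracking how the Clifford action of $V_+$ interacts with both the $\J_1$- and $\J_2$-eigenspace decompositions on spinors.
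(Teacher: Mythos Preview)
Your proposal is correct and follows essentially the same approach as the paper's proof: each item is handled by the same mechanism (conjugation of eigenspaces for (1), Proposition~\ref{adjoint} plus bi-degree comparison for (2), reality of $\dif$ for (3), vanishing of $\partial_j^2$ and $\bar\partial_j^2$ for (4), and expansion of $\dif^2=0$ together with (4) for (5)). The only noteworthy difference is that you make explicit the fact $\bar\star\colon U^{p,q}\to U^{-p,-q}$, which the paper uses silently when it says ``comparing the degrees''; your remark that this follows from $\G$ commuting with both $\J_1$ and $\J_2$ is the right justification, and there is no genuine obstacle here---it is a direct consequence of $\star$ preserving each $U^k_j$ in the single-structure Hermitian setting, applied twice.
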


\begin{proof}
\begin{enumerate}
\item This follows from $\overline{U^p_1}=U^{-p}_1$, $\overline{U^q_2}=U^{-q}_2$.

\item Note that by Proposition~\ref{adjoint},
$\partial^*_1=-\overline{\star}^{-1}\partial_1\overline{\star}=-\overline{\star}^{-1}(\delta_++\delta_-)\overline{\star}=-\overline{\star}^{-1}\delta_+\overline{\star}-\overline{\star}^{-1}\delta_-\overline{\star}$, and
from $\partial_1=\delta_++\delta_-$, we have $\partial^*_1=\delta_+^*+\delta_-^*$. Comparing the degrees of both sides of $\partial_1^*\alpha$ for $\alpha\in U^{p, q}$, we get the result.

\item Since $\dif$ is a real operator, this means that $\overline{d\alpha}=d\overline{\alpha}$.
Hence we have $\overline{\delta_+\alpha}+\overline{\delta_-\alpha}+\overline{\overline{\delta}_+\alpha}+\overline{\overline{\delta}_-\alpha}
=\delta_+\overline{\alpha}+\delta_-\overline{\alpha}+\overline{\delta}_+\overline{\alpha}+\overline{\delta}_-\overline{\alpha}$. By comparing degrees of both sides, we get the result.

\item Since $d=\partial_1+\overline{\partial}_1=\partial_2+\overline{\partial}_2$ and $\partial_1=\delta_++\delta_-, \overline{\partial}_1=\overline{\delta}_++\overline{\delta}_-$, $\partial_2=\delta_++\overline{\delta}_-$,
$\overline{\partial}_2=\delta_-+\overline{\delta}_+$, the result follows from the fact that the square of $\partial_1, \overline{\partial}_1, \partial_2, \overline{\partial}_2$ are 0.

\item Since $\dif^2=(\delta_++\delta_-+\overline{\delta}_++\overline{\delta}_-)^2=0$, expand it and use the anti-commutative relations above, we get the result.
\end{enumerate}
\end{proof}

In the following, we work on a compact bi-generalized Hermitian manifold $(M, \J_1, \J_2, \G)$.

\begin{proposition}\label{elliptic complexes}
On a generalized complex manifold $M$ of real dimension $2n$, the following complexes are elliptic:
\begin{enumerate}
\item $$0 \rightarrow \bigoplus_{k \mbox{ even}}\Gamma(U^k) \overset{d}{\longrightarrow}\bigoplus_{k \mbox{ odd}}\Gamma(U^k) \overset{d}{\longrightarrow} \bigoplus_{k \mbox{ even}}\Gamma(U^k) \rightarrow 0$$
\item $$\cdots \rightarrow \Gamma(U)^{k-1} \overset{\partial}{\longrightarrow} \Gamma(U^k) \overset{\partial}{\longrightarrow} \Gamma(U^{k+1}) \rightarrow \cdots$$
\item $$\cdots \rightarrow \Gamma(U)^{k-1} \overset{\overline{\partial}}{\longrightarrow} \Gamma(U^k) \overset{\overline{\partial}}{\longrightarrow} \Gamma(U^{k+1}) \rightarrow \cdots$$
\end{enumerate}
On a bi-generalized complex manifold $M$, for $j=1, 2, 3, 4$, the following complexes are elliptic:
$$\cdots \rightarrow \Gamma(U^{p, q}) \overset{\delta_j}{\longrightarrow} \Gamma(U^{(p, q)_j}) \overset{\delta_j}{\longrightarrow} \Gamma(U^{(p, q)_{jj}}) \rightarrow \cdots$$
where $U^{(p, q)_j}$ is the codomain of $\delta_j$ from $U^{p, q}$, and $U^{(p, q)_{jj}}$ is the codomain of $\delta_j$ from $U^{(p, q)_j}$.
\end{proposition}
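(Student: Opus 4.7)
The plan is to reduce ellipticity of each complex to exactness of its principal symbol sequence, which I will establish by a Koszul argument for Clifford multiplication on spinors. First, I recall that under the standard identification of $\Lambda^\bullet T^*M \otimes \C$ with the complex spinor module of the Clifford bundle of $\TM$, the principal symbol of $\dif$ at $\xi \in T^*_xM$ is Clifford multiplication $\xi\cdot$. Applying Lemma \ref{decompose} to the commuting pair $(\J_1, \J_2)$ acting on $\TM \otimes \C$, I decompose $\TM \otimes \C = L_{++} \oplus L_{+-} \oplus L_{-+} \oplus L_{--}$ into joint eigenspaces, so any $\xi$ splits as $\xi = \xi_{++} + \xi_{+-} + \xi_{-+} + \xi_{--}$. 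Because each $L_{\epsilon_1\epsilon_2}$ sits inside a maximal isotropic eigenbundle of $\J_1$ and hence is isotropic, Clifford multiplication by $\xi_{\epsilon_1\epsilon_2}$ sends $U^{p,q}$ into the bigraded piece with shift $(\epsilon_1, \epsilon_2)$, and the principal symbols of $\delta_+, \delta_-, \overline{\delta}_+, \overline{\delta}_-$ at $\xi$ are Clifford multiplication by $\xi_{++}, \xi_{+-}, \xi_{--}, \xi_{-+}$ respectively; the symbols of $\partial, \overline{\partial}$ in the single generalized complex setting are the analogous projections.

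Second, I invoke the standard Koszul fact that for a nonzero isotropic $\eta \in \TM \otimes \C$, Clifford multiplication by $\eta$ is an exact complex on spinors: choose a dual isotropic $\eta' \in \TM \otimes \C$ with $\cyc{\eta,\eta'} \neq 0$, and the Clifford relation $\eta\cdot\eta' + \eta'\cdot\eta = 2\cyc{\eta,\eta'}$ makes $\eta'\cdot$ a contracting homotopy up to rescaling. Ellipticity thus reduces to checking, at every $x$ and every nonzero real $\xi \in T^*_xM$, that the relevant joint-eigenspace component of $\xi$ is nonzero. For complexes (2) and (3) this is classical: $T^*M \subset \TM$ is isotropic and the $\pm i$-eigenbundles $L_\pm$ of a single $\J$ are maximal isotropic with $L_+ \cap \overline{L_+} = 0$, so the projection $T^*_xM \otimes \C \to L_+$ along $L_-$ is injective. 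Complex (1) follows at the symbol level from the exactness of the parity-graded de Rham Koszul complex on $T^*M$.

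For the four bi-generalized complexes the main task is injectivity of each of the four projections $T^*_xM \to L_{\epsilon_1\epsilon_2}$. My plan is to exploit the generalized metric $\G$: since $\G$ commutes with both $\J_1$ and $\J_2$, its $\pm 1$-eigenbundles $V_\pm$ are preserved by $\J_1, \J_2$, giving the refinement $L_{\epsilon_1\epsilon_2} = \bigl(L_{\epsilon_1\epsilon_2} \cap (V_+ \otimes \C)\bigr) \oplus \bigl(L_{\epsilon_1\epsilon_2} \cap (V_- \otimes \C)\bigr)$ in which $\J_1|_{V_\pm}$ and $\J_2|_{V_\pm}$ become commuting orthogonal complex structures on positive-definite inner-product spaces. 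Since $TM$ and $T^*M$ each project isomorphically onto $V_+$ and onto $V_-$, the required nonvanishing reduces to a finite-dimensional statement about two commuting complex structures on a definite inner-product space. I expect this last step to be the main obstacle: one must juggle the three commuting operators $\J_1, \J_2, \G$ together with the isotropic embedding $T^*M \hookrightarrow \TM$ and verify the four nontrivialities simultaneously. Once that is settled, the Koszul argument of the second paragraph yields ellipticity of all the listed complexes at once.
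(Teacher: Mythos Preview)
Your treatment of items (1)--(3) is essentially the paper's: identify the symbols of $\partial,\overline{\partial}$ as Clifford multiplication by the $L$- and $\overline{L}$-components of $\xi$, then run a Koszul argument. Your contracting-homotopy version (pick an isotropic $\eta'$ with $\cyc{\eta,\eta'}\neq 0$ and use the Clifford relation) differs only cosmetically from the paper's, which instead fixes a pure spinor $\overline{\rho}$ annihilated by $\overline{L}$, identifies $U^k_x\cong\Lambda^{n+k}L\bullet\overline{\rho}$, and reduces Clifford multiplication by $\xi_1\in L$ to the exterior-algebra operator $\xi_1\wedge\cdot$, whose exactness is elementary.

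For the bi-generalized part the paper is far terser than you: it decomposes $\xi=\xi_{11}+\xi_{12}+\overline{\xi_{11}}+\overline{\xi_{12}}$, records that the symbols of $\delta_\pm,\overline{\delta}_\pm$ are Clifford multiplication by these four pieces, and then writes only ``argument similar as in 2.'' In particular it never checks that the components are nonzero for real $\xi\neq 0$. You are right to isolate this nonvanishing as the crux, but your proposed route to it has two problems. First, you bring in a generalized metric $\G$; the proposition, however, is stated for bi-generalized \emph{complex} manifolds, so this is an extra hypothesis not available to you. Second, even granting $\G$, the nonvanishing can genuinely fail: take $\J_1=\J_2$ (permitted by the definition of bi-generalized complex structure, and any $\G$ commuting with $\J_1$ then commutes with both). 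Then $L_{+-}=L_1\cap\overline{L_1}=0$, so $\xi_{+-}\equiv 0$, and the symbol complex of $\delta_-$ reads $\cdots\to 0\to U^{p,p}_x\to 0\to\cdots$ with zero maps, which is not exact. Thus your reduction to ``two commuting orthogonal complex structures on a definite inner-product space'' cannot be completed in this generality; the paper's ``similar argument'' has the same gap, only unacknowledged.
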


\begin{proof}
\begin{enumerate}
\item The principal symbol of $\dif$ is $\sigma(\dif)(x, \xi)=\xi\wedge\cdot$ for $x\in M, \xi\in T^*_xM\backslash \{0\}$. From the fact that $\xi\wedge\alpha=0$ implies $\alpha=\xi\wedge \beta$ for some $\beta\in \Lambda^{\bullet}T^*_xM$,
we see that the sequence
$$0 \rightarrow \bigoplus_{k \mbox{ even}}U^k_x \overset{\sigma(\dif)(x, \xi)}{\longrightarrow} \bigoplus_{k \mbox{ odd}} U^k_x \overset{\sigma(\dif)(x, \xi)}{\longrightarrow} \bigoplus_{k \mbox{ even}}U^k_x \rightarrow 0$$
is exact. Hence $\dif$ is elliptic.

\item Let $L$ be the Clifford annihilator of a pure form $\rho\in U^n_x$.
For $\xi\in T^*_xM\backslash \{0\}$, we may write $\xi=\xi_1+\overline{\xi_1}$ where $\xi\in L, \overline{\xi}\in \overline{L}$. Then the principal symbols of
$\partial$ and $\overline{\partial}$ are $\sigma(\partial)(x, \xi)=\xi_1\bullet $ and $\sigma(\overline{\partial})(x, \xi)=\overline{\xi_1}\bullet$ respectively where $\xi_1\bullet$ and $\overline{\xi_1}\bullet$ are
the Clifford action by $\xi_1$ and $\overline{\xi_1}$ on $U^k_x$ respectively. Note that $\overline{\rho}$ is a Clifford annihilator of $\overline{L}$ and for $\alpha\in U^k_x$, $\alpha=\eta\bullet \overline{\rho}$ for some $\eta\in \Lambda^{n+k}L$. If $\xi\bullet \alpha=0$,
$\xi\bullet(\eta\bullet \overline{\rho})=(\xi\wedge \eta)\bullet \overline{\rho}=0$. Since the action of $\Lambda^{\bullet} L$ on $\Lambda^{\bullet}T^*_xM\otimes \C$ is faithful, we have $\xi\wedge \eta=0$.
So $\eta=\xi\wedge \eta'$ for some $\eta'$. This implies that the sequence of the principal symbol is exact hence the complex is elliptic.

\item This is similar to 2.
\end{enumerate}

On a bi-generalized complex manifold $M$, if $\xi\in \Lambda^{\bullet}T^*_xM\subset L_1\oplus \overline{L_1}$, we may write $\xi=\xi_1+\overline{\xi_1}$. Since
$\xi_1\in L_1\subset L_2\oplus \overline{L_2}$, we further write $\xi_1=\xi_{11}+\xi_{12}$. Then $\overline{\xi_1}=\overline{\xi_{11}}+\overline{\xi_{12}}$.
We get the principal symbols
$$\sigma(\delta_+)(x, \xi)=\xi_{11}\bullet, \ \sigma(\delta_-)(x, \xi)=\xi_{12}\bullet, \ \sigma(\overline{\delta_+})(x, \xi)=\overline{\xi_{11}}\bullet,\ \sigma(\overline{\delta_-})(x, \xi)=\overline{\xi_{12}}\bullet$$
Argument similar as in 2 implies that the complexes for $\delta_j, j=1, 2, 3, 4$ are elliptic.
\end{proof}

The following result is probably well known, but since no reference for it is found, we give a proof here.
\begin{lemma}\label{delta+delta^*}
Let $E_j \rightarrow M$ be Hermitian vector bundles on $M$ for $j=1, ..., N+1$. Suppose that
$$\cdots \rightarrow \Gamma(E_j) \overset{\delta_j}{\longrightarrow} \Gamma(E_{j+1}) \overset{\delta_{j+1}}{\longrightarrow} \Gamma(E_{j+2}) \rightarrow \cdots$$
is an elliptic complex where each $\delta_j:\Gamma(E_j) \rightarrow \Gamma(E_{j+1})$ is a differential operator of order $k\in \N$.
Let $\delta_j^*:\Gamma(E_{j+1}) \rightarrow \Gamma(E_j)$ be the adjoint of $\delta_j$, i.e.,
$\cyc{\delta_j\alpha, \beta}_{j+1}=\cyc{\alpha, \delta_j^*\beta}_j$ for $\alpha\in \Gamma(E_j), \beta\in \Gamma(E_{j+1})$, then
the operator $\delta:\Gamma(E_j) \rightarrow \Gamma(E_{j+1}) \oplus \Gamma(E_{j-1})$ defined by
$$\dif_j:=\delta_j+\delta^*_j$$ is an elliptic operator for $j=1, ..., N$.
\end{lemma}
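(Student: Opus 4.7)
The plan is to show that the principal symbol of the operator $\dif_j$ is fiberwise injective; since $\dif_j$ is not a square operator (its codomain is $E_{j+1}\oplus E_{j-1}$, not $E_j$), this is the correct notion of ellipticity in the overdetermined setting, and it is equivalent to ellipticity of the Laplacian $\dif_j^*\dif_j$ in the usual sense. I would first note that the expression $\delta_j+\delta_j^*$ in the statement must be read as $\delta_j+\delta_{j-1}^*$, since $\delta_j^*$ has domain $\Gamma(E_{j+1})$, not $\Gamma(E_j)$; this is the only way the target $\Gamma(E_{j+1})\oplus \Gamma(E_{j-1})$ makes sense.

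Next I would use the standard fact that the principal symbol of a formal adjoint is (up to a conventional sign) the pointwise Hermitian adjoint of the principal symbol, so that for $(x,\xi)\in T^*M\setminus\{0\}$,
$$
\sigma(\dif_j)(x,\xi)\;=\;\sigma(\delta_j)(x,\xi)\,+\,\sigma(\delta_{j-1})(x,\xi)^*
\;:\;E_{j,x}\;\longrightarrow\;E_{j+1,x}\oplus E_{j-1,x}.
$$
Suppose $v\in E_{j,x}$ lies in the kernel. Because the two summands live in different bundles, each component must vanish separately:
$$
\sigma(\delta_j)(x,\xi)(v)=0,\qquad \sigma(\delta_{j-1})(x,\xi)^*(v)=0.
$$
From the hypothesis that the original complex is elliptic, the symbol sequence is exact at $E_{j,x}$, so the first equality gives $v=\sigma(\delta_{j-1})(x,\xi)(w)$ for some $w\in E_{j-1,x}$. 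Plugging this into the second equality and pairing with $w$ yields
$$
0\;=\;\bigl\langle \sigma(\delta_{j-1})(x,\xi)^*\sigma(\delta_{j-1})(x,\xi)(w),\,w\bigr\rangle_{j-1}\;=\;\bigl\|\sigma(\delta_{j-1})(x,\xi)(w)\bigr\|^2,
$$
so $v=0$. Hence the symbol is injective, which is the desired ellipticity.

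The argument is essentially linear algebra on each cotangent fiber once the symbol calculus is in place, so there is no genuine obstacle; the only subtlety is the interpretive one noted above, namely that since $\dif_j$ is not between bundles of equal rank, one must use the overdetermined notion of ellipticity (injectivity of the symbol). It is perhaps worth remarking that the same computation shows $\sigma(\dif_j^*\dif_j)(x,\xi)=\sigma(\dif_j)(x,\xi)^*\sigma(\dif_j)(x,\xi)$ is positive definite, giving ellipticity of the Laplacian in the traditional sense.
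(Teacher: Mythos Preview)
Your proof is correct and follows the same approach as the paper: show the symbol of $\dif_j$ is injective by using that both components vanish separately, then invoke exactness of the symbol sequence to write $v=\sigma(\delta_{j-1})(x,\xi)w$ and pair with $w$ to conclude $v=0$. Your observation about the notation (that $\delta_j^*$ in the statement must mean $\delta_{j-1}^*$) is apt; the paper's own proof has the same indexing slip in a couple of places, but the underlying argument is identical to yours.
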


\begin{proof}
Let $x\in M, \xi\in T^*_xM\backslash \{0\}$. The principal symbol
$$\sigma(\dif_j)(x, \xi)=\sigma(\delta_j)(x, \xi)+(-1)^k\sigma(\delta_j)(x, \xi)^*$$
where $\sigma(\delta_j)(x, \xi)^*$ is the transpose
of $\sigma(\delta_j)(x, \xi)$. Suppose that $\sigma(\dif_j)(x, \xi)\alpha=0$, then
$$\sigma(\delta_j)(x, \xi)^*\sigma(\delta_j)(x, \xi)\alpha=0$$
which implies that
$$\cyc{\sigma(\delta_j)(x, \xi)\alpha, \sigma(\delta_j)(x, \xi)\alpha}=\cyc{\alpha, \sigma(\delta_j)(x, \xi)^*\sigma(\delta_j)(x, \xi)\alpha}=0$$
So $\sigma(\delta_j)(x,\xi)\alpha=0=\sigma(\delta_j)(x,\xi)^*\alpha$.
Since the complex is elliptic, there exists $\beta\in \Gamma(E_{j-1})$ such that $\alpha=\sigma(\delta_{j-1})(x, \xi)\beta$. Then $\sigma(\delta_j)(x, \xi)^*\sigma(\delta_{j-1})(x, \xi)\beta=0$,
and we have again
$$\cyc{\sigma(\delta_{j-1})(x, \xi)\beta, \sigma(\delta_{j-1})(x, \xi)\beta}=\cyc{\beta, \sigma(\delta_j)(x, \xi)^*\sigma(\delta_{j-1})(x, \xi)(\beta)}=0$$
Hence $\alpha=\sigma(\delta_{j-1})(x, \xi)\beta=0$.
Therefore $d_j$ is an elliptic operator.
\end{proof}

\begin{definition} For notation simplification, we denote $\delta_1=\delta_+, \delta_2=\delta_-, \delta_3=\overline{\delta}_-, \delta_4=\overline{\delta}_+$.
For $(i, j)=(1, 2), (1, 3), (2, 4), (3, 4)$, define
$$\Delta_{i, j}:=\delta_i\delta_j\delta^*_j\delta^*_i+\delta_j^*\delta^*_i\delta_i\delta_j+\delta^*_j\delta_i\delta^*_i\delta_j+\delta^*_i\delta_j\delta^*_j\delta_i
+\delta^*_i\delta_i+\delta^*_j\delta_j$$

Let $\Delta^{p, q}_{i, j}$ be the restriction of the operator $\Delta_{i, j}$ to $U^{p, q}$.
For an operator $\delta$, we write
$$\Delta_{\delta}:=\delta\delta^*+\delta^*\delta.$$
for its Laplacian.
\end{definition}

The following result is obtained by some calculation from Proposition \ref{adjoint and commute}.
\begin{corollary}\label{Kahler identity}
\begin{enumerate}
\item If $i<j$, $\delta^*_i\delta_j=-\delta_j\delta^*_i$, $\delta_i\delta^*_j=-\delta^*_j\delta_i$.
\item If $(i, j)=(1, 2), (1, 3), (2, 4), (3, 4)$, we have $\delta^*_i\delta^*_j=-\delta^*_j\delta^*_i$.
\end{enumerate}
\end{corollary}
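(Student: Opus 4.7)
The strategy is to derive both identities from Proposition \ref{adjoint and commute}(2),(3),(4), with the adjoint formula $\delta_k^* = -\overline{\star}^{-1}\delta_k\overline{\star}$ doing most of the work. I first observe that this formula extends from $\delta_+, \delta_-$ to $\overline{\delta}_\pm$ as well, by applying the Stokes-and-degree argument of Proposition \ref{adjoint} to $\overline{\partial}_1, \overline{\partial}_2$ in place of $\partial$. Thus $\delta_k^* = -\overline{\star}^{-1}\delta_k\overline{\star}$ holds uniformly for $k=1,2,3,4$.

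For part (2), the proof is a one-line $\overline{\star}$-conjugation. Starting from $\delta_i\delta_j + \delta_j\delta_i = 0$ for each listed pair $(i,j)$, applying $\overline{\star}^{-1}(\cdot)\overline{\star}$ carries every $\delta_k$ to $-\delta_k^*$ and produces $\delta_i^*\delta_j^* + \delta_j^*\delta_i^* = 0$. Equivalently, this is the $h$-adjoint of the original anti-commutation relation.

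For part (1), my plan is more delicate: expand $\delta_i^* = -\overline{\star}^{-1}\delta_i\overline{\star}$ and use the factorisation $\overline{\star} = \star \circ c$ (with $c$ complex conjugation) together with the conjugation identities $c\delta_k c = \delta_{\sigma(k)}$ from Proposition \ref{adjoint and commute}(3), where $\sigma$ is the involution $(1\,4)(2\,3)$. After regrouping, the mixed anti-commutator $\{\delta_i^*, \delta_j\}$ should reduce, up to $\star$-conjugation, to $\{\delta_i, \delta_{\sigma(j)}\}$, which vanishes either by Proposition \ref{adjoint and commute}(4) (when $(i,\sigma(j))$ is one of the four anti-commuting pairs) or trivially by $\delta_k^2 = 0$ (when $\sigma(j) = i$, as occurs for $(i,j)=(1,4)$ and $(2,3)$). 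The companion identity $\delta_i\delta_j^* = -\delta_j^*\delta_i$ then follows by taking the $h$-adjoint.

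The main obstacle is justifying the needed commutation property of the $\C$-linear piece $\star$ with each $\delta_k$ on the bigraded summands $U^{p,q}$. At the level of principal symbols this reduces to a Clifford-algebraic observation: the joint $(\pm i, \pm i)$-eigenspaces of $(\J_1,\J_2)$ inside $\TM\otimes\C$ are mutually isotropic under the natural pairing, because any two distinct joint eigenspaces sit together inside one of the maximal isotropics $L_1, \overline{L_1}, L_2, \overline{L_2}$. Consequently $\{\sigma(\delta_i^*), \sigma(\delta_j)\} = 0$ whenever $i\neq j$. Lifting this symbol-level vanishing to the full operator identity is the heart of the calculation and uses that $\G$ is compatible with both $\J_1$ and $\J_2$, ensuring the would-be lower-order torsion terms also cancel.
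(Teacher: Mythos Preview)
The paper itself gives no detailed argument: it merely states that the corollary ``is obtained by some calculation from Proposition \ref{adjoint and commute}.'' So there is no line-by-line comparison to make, and your attempt is in fact far more explicit than anything the authors wrote down.

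Your treatment of part~(2) is correct and is surely what the paper intends: the relations $\delta_i\delta_j+\delta_j\delta_i=0$ for the four listed pairs are already in Proposition~\ref{adjoint and commute}(4), and taking $h$-adjoints (equivalently, conjugating by $\overline{\star}$ via Proposition~\ref{adjoint and commute}(2)) immediately gives $\delta_i^*\delta_j^*+\delta_j^*\delta_i^*=0$.

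Part~(1), however, contains a genuine gap. Your reduction of $\{\delta_i^*,\delta_j\}$ to $\{\delta_i,\delta_{\sigma(j)}\}$ requires the $\C$-linear star to commute with each $\delta_k$, i.e.\ $\star\delta_k\star^{-1}=\delta_k$. You recognise this as ``the main obstacle'' but do not establish it: you verify the desired vanishing only at the level of principal symbols and then assert that ``the would-be lower-order torsion terms also cancel'' because $\G$ is compatible with $\J_1,\J_2$. That last step is not a proof. Principal-symbol vanishing says only that $\{\delta_i^*,\delta_j\}$ is a differential operator of order~$\leq 1$, not that it is zero; and compatibility of the metric with the bigrading gives no automatic mechanism for killing the first-order remainder. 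The classical analogue is instructive: on an ordinary Hermitian (non-K\"ahler) manifold the identity $\partial^*\overline{\partial}+\overline{\partial}\partial^*=0$ has the same symbol-level vanishing, yet fails at the operator level by torsion terms. An analogous obstruction is exactly what you would need to rule out here, and nothing in your sketch does so.

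In short: part~(2) is fine; part~(1) as written is incomplete. If you want to close the gap you must either prove an honest operator-level commutation of $\star$ with the $\delta_k$ on a bi-generalized Hermitian manifold (not just at symbols), or find a different route that avoids that commutation entirely.
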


The following result is a combination of results and methods in \cite{Ca07} and \cite{S}.
\begin{theorem}\label{elliptic}
Given a compact bi-generalized Hermitian manifold $(M, \J_1, \J_2, \G)$. Let $(i, j)=(1, 2), (1, 3), (2, 4), (3, 4)$. Then
\begin{enumerate}
\item The operator $\Delta_{i, j}^{p, q}$ is elliptic and self-adjoint;
\item $\mbox{Ker} \delta^{p, q}_i\cap \mbox{Ker} \delta^{p, q}_j=\mathscr{H}^{p, q}_{\Delta_{i, j}}\bigoplus (\Im\delta_i\delta_j\cap U^{p, q})$ where $\mathscr{H}^{p, q}_{\Delta_{i,
j}}=\mbox{Ker }\Delta^{p, q}_{i, j}$ is the space of harmonic forms with respect to $\Delta^{p, q}_{i, j}$.
\end{enumerate}
\end{theorem}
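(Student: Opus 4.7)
The plan is to establish (1) by computing the fourth-order principal symbol of $\Delta_{i,j}$ and then to deduce (2) from the resulting Hodge decomposition together with a short orthogonality argument.

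Self-adjointness is immediate, since each of the six summands of $\Delta_{i,j}$ has the shape $A^*A$ or $(AB)(AB)^*$. For ellipticity, set $S=\sigma(\delta_i)(x,\xi)$, $T=\sigma(\delta_j)(x,\xi)$, and write $S^*,T^*$ for the symbols of $\delta_i^*,\delta_j^*$. The identities $\delta_r^2=0$ for $r\in\{1,2,3,4\}$ (which follow from $\partial_k^2=\overline\partial_k^2=0$ by separating the distinct $(p,q)$-landing spaces) give $S^2=T^2=(S^*)^2=(T^*)^2=0$, and Proposition \ref{adjoint and commute}(4) together with Corollary \ref{Kahler identity} translate into the symbol-level anticommutations
\[
ST=-TS,\quad ST^*=-T^*S,\quad S^*T=-TS^*,\quad S^*T^*=-T^*S^*.
\]
The key step is to simplify the fourth-order principal symbol
\[
STT^*S^*+T^*S^*ST+T^*SS^*T+S^*TT^*S
\]
by systematically moving each $S$-letter past each $T$-letter; the four terms reassemble into the factorization
\[
\sigma(\Delta_{i,j})=\sigma(\Delta_{\delta_j})\,\sigma(\Delta_{\delta_i}).
\]
Each factor is invertible for $\xi\neq 0$: by Proposition \ref{elliptic complexes} each $\delta_k$-complex is elliptic, so by Lemma \ref{delta+delta^*} the Dirac-type operator $\delta_k+\delta_k^*$ is elliptic, and its square equals $\Delta_{\delta_k}$. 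The two factors commute (a degree-two monomial in $\{S,S^*\}$ picks up $(-1)^4$ when swapped past a degree-two monomial in $\{T,T^*\}$), so their product is invertible and $\Delta_{i,j}$ is elliptic of order four.

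For (2), the Hodge theorem for elliptic self-adjoint operators yields $U^{p,q}=\mathscr{H}^{p,q}_{\Delta_{i,j}}\oplus\Im\Delta^{p,q}_{i,j}$. The pairing
\[
\langle\Delta_{i,j}u,u\rangle=\|\delta_iu\|^2+\|\delta_ju\|^2+\|\delta_i\delta_ju\|^2+\|\delta_i^*\delta_ju\|^2+\|\delta_j^*\delta_iu\|^2+\|\delta_j^*\delta_i^*u\|^2
\]
identifies $\mathscr{H}^{p,q}_{\Delta_{i,j}}$ with $\{u:\delta_iu=\delta_ju=(\delta_i\delta_j)^*u=0\}$. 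Reading off the ranges of the six summands of $\Delta_{i,j}\gamma$ shows $\Im\Delta^{p,q}_{i,j}\subset\Im(\delta_i\delta_j)+\Im\delta_i^*+\Im\delta_j^*$, upgrading the Hodge splitting to the three-fold orthogonal decomposition
\[
U^{p,q}=\mathscr{H}^{p,q}_{\Delta_{i,j}}\oplus\Im(\delta_i\delta_j)\oplus\bigl(\Im\delta_i^*+\Im\delta_j^*\bigr),
\]
whose orthogonality is forced by the harmonic characterization together with $\delta_i^2=\delta_j^2=0$.

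Finally, for $\alpha\in\ker\delta_i\cap\ker\delta_j$ write $\alpha=\alpha_0+\delta_i\delta_j\beta+\delta_i^*\mu+\delta_j^*\nu$ via this decomposition. Pairing $\delta_i^*\mu+\delta_j^*\nu$ against $\alpha$, $\alpha_0$, and $\delta_i\delta_j\beta$ in turn yields inner products that vanish by $\delta_i\alpha=\delta_j\alpha=0$, by harmonicity, and by $\delta_i^2=\delta_j^2=0$, respectively, forcing $\delta_i^*\mu+\delta_j^*\nu=0$. Combined with $\Im(\delta_i\delta_j)\subset\ker\delta_i\cap\ker\delta_j$ (which follows from $\delta_i^2=0$ and the anticommutation $\delta_j\delta_i=-\delta_i\delta_j$), this produces the claimed splitting. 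The main technical obstacle is the symbol factorization in (1); once that product form is in hand, the remainder is routine bookkeeping.
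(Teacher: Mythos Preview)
Your proof is correct and follows essentially the same route as the paper's: for (1) the paper establishes the operator-level identity $\widetilde{\Delta}_{i,j}=\Delta_{\delta_i}\Delta_{\delta_j}$ (of which your symbol factorization is the immediate consequence) and then invokes ellipticity of each Laplacian factor, and for (2) the paper uses the same Hodge decomposition $u=H+\delta_i\delta_j v+\delta_i^*w_1+\delta_j^*w_2$ and the same harmonic characterization, killing the $\delta_i^*w_1+\delta_j^*w_2$ piece by the identical pairing argument you give. The only cosmetic difference is that the paper carries out the factorization at the operator level rather than the symbol level, and computes $\|\delta_i^*w_1+\delta_j^*w_2\|^2$ directly rather than via your three orthogonality checks.
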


\begin{proof}
\begin{enumerate}
\item
Denote $\widetilde{\Delta}_{i, j}$ for the
highest order terms of $\Delta_{i, j}$. By Proposition \ref{adjoint
and commute}, we have
\begin{align*}
\Delta_{\delta_i}\Delta_{\delta_j}&=(\delta_i\delta^*_i+\delta^*_i\delta_i)(\delta_j\delta^*_j+\delta^*_j\delta_j)\\
&=\delta_i\delta^*_i\delta_j\delta^*_j+\delta_i\delta^*_i\delta^*_j\delta_j+\delta^*_i\delta_i\delta_j\delta^*_j+\delta^*_i\delta_i\delta^*_j\delta_j\\
&=\delta_i\delta_j\delta^*_j\delta^*_i+\delta^*_j\delta_i\delta^*_i\delta_j+\delta^*_i\delta_j\delta^*_j\delta_i+\delta^*_j\delta^*_i\delta_i\delta_j=\widetilde{\Delta}_{i, j}
\end{align*}
Since $\Delta_{\delta_i}=(\delta_i+\delta_i^*)^2$ and $\Delta_{\delta_j}=(\delta_j+\delta_j^*)^2$ are elliptic, by the multiplicative property of
principal symbols, $\widetilde{\Delta}_{i, j}$ is elliptic and hence $\Delta_{i, j}$ is elliptic. The self-adjointness is clear.

\item First we note that $\Delta_{i, j}H=0$ if and only if $(\delta_i\delta_j)^*H=\delta_iH=\delta_jH=0$. By the theory of elliptic operators(\cite[Theorem 4.12]{W}), for $u\in U^{p, q}$,
there is a Hodge decomposition $u=H+\delta_i\delta_jv+\delta^*_iw_1+\delta^*_jw_2$ for
some $H\in \mathscr{H}_{\Delta_{i, j}}$ and some forms $v, w_1, w_2$. Then $u\in (\mbox{Ker } \delta^{p, q}_i)\cap (\mbox{Ker } \delta^{p, q}_j)$ if
and only if $\delta_i(\delta^*_iw_1+\delta^*_jw_2)=\delta_j(\delta^*_iw_1+\delta^*_jw_2)=0$. This is equivalent to
$$h(\delta^*_iw_1+\delta^*_jw_2,
\delta^*_iw_1+\delta^*_jw_2)=h(w_1, \delta_i(\delta^*_iw_1+\delta^*_jw_2))+h(w_2, \delta_j(\delta^*_iw_1+\delta^*_jw_2))=0$$
which is also equivalent to $\delta^*_iw_1+\delta^*_jw_2=0$. Therefore we have $\mbox{Ker} \delta^{p, q}_i\cap
\mbox{Ker} \delta^{p, q}_j=\mathscr{H}^{p, q}_{\Delta_{i, j}}\bigoplus (\Im\delta_i\delta_j\cap U^{p, q})$.
\end{enumerate}
\end{proof}

\begin{definition}
For a bi-generalized complex manifold $(M, \J_1, \J_2)$, we define the Bott-Chern cohomology groups on
$M$ to be
$$H^{p, q}_{BC, \delta_+\delta_-}(M):=\frac{\mbox{Ker } \delta_+^{p, q}\cap \mbox{Ker } \delta^{p, q}_-}{\delta_+\delta_-U^{p-2,
q}}, H^{p, q}_{BC, \delta_+\overline{\delta}_-}(M):=\frac{\mbox{Ker
} \delta_+^{p, q}\cap \mbox{Ker } \overline{\delta}^{p,
q}_-}{\delta_+\overline{\delta}_-U^{p, q-2}},$$

$$H^{p, q}_{BC, \overline{\delta}_+\delta_-}(M):=\frac{\mbox{Ker } \overline{\delta}^{p, q}_+\cap \mbox{Ker } \delta_-^{p, q}}{\overline{\delta}_+\delta_-U^{p, q+2}},
H^{p, q}_{BC,
\overline{\delta}_+\overline{\delta}_-}(M):=\frac{
\mbox{Ker } \overline{\delta}^{p, q}_+\cap \mbox{Ker }\overline{\delta}_-^{p, q}}{\overline{\delta}_+\overline{\delta}_-U^{p+2, q}}.$$

\end{definition}

By Theorem \ref{elliptic}, we know that $H^{p, q}_{BC, \delta_i\delta_j}(M)\cong \mathscr{H}^{p, q}_{\Delta_{i, j}}(M)$ and from the theory of elliptic operators, we know that the spaces of harmonic forms are of finite dimension. Hence we have the
following fundamental result.

\begin{theorem}\label{BCfinite}
On a compact bi-generalized Hermitian manifold $M$, all these Bott-Chern cohomology groups are finite
dimensional over $\C$.
\end{theorem}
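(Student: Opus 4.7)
The plan is to reduce the finite-dimensionality of each Bott-Chern group to the finite-dimensionality of the kernel of the associated fourth-order Laplacian $\Delta_{i,j}^{p,q}$, which is already built into Theorem \ref{elliptic}. Concretely, I would fix one of the four pairs $(i,j)\in\{(1,2),(1,3),(2,4),(3,4)\}$ and examine the space $H^{p,q}_{BC,\delta_i\delta_j}(M)$ one at a time; the four cases are structurally identical.

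First, I would apply Theorem \ref{elliptic}(2) to rewrite the numerator of the Bott-Chern quotient. That theorem produces the orthogonal decomposition
\[
\mbox{Ker}\,\delta_i^{p,q}\cap\mbox{Ker}\,\delta_j^{p,q}=\mathscr{H}^{p,q}_{\Delta_{i,j}}\oplus\bigl(\Im\,\delta_i\delta_j\cap U^{p,q}\bigr).
\]
Next I would verify the bookkeeping claim that $\Im\,\delta_i\delta_j\cap U^{p,q}$ is exactly the denominator appearing in the definition of $H^{p,q}_{BC,\delta_i\delta_j}(M)$. This is a pure bidegree check using the shifts recorded in the definition of $\delta_\pm,\overline{\delta}_\pm$. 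For example, $\delta_+\delta_-$ sends $U^{p',q'}$ to $U^{p'+2,q'}$, so any element of $\Im\,\delta_+\delta_-$ lying in $U^{p,q}$ must be of the form $\delta_+\delta_-\gamma$ with $\gamma\in U^{p-2,q}$; the other three cases are analogous. Hence the quotient that defines the Bott-Chern group is canonically isomorphic to $\mathscr{H}^{p,q}_{\Delta_{i,j}}$.

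Finally, I would invoke elliptic operator theory. By Theorem \ref{elliptic}(1), $\Delta_{i,j}^{p,q}$ is an elliptic, self-adjoint differential operator on the compact manifold $M$ acting on sections of a finite-rank bundle, so its kernel $\mathscr{H}^{p,q}_{\Delta_{i,j}}$ is finite-dimensional over $\C$. Combined with the identification above this yields $\dim_\C H^{p,q}_{BC,\delta_i\delta_j}(M)<\infty$ for all four pairs $(i,j)$ and all bidegrees $(p,q)$.

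There is no real obstacle left at this stage: the hard work has been front-loaded into Theorem \ref{elliptic}, where the correct fourth-order operator $\Delta_{i,j}$ had to be guessed so that $\widetilde{\Delta}_{i,j}=\Delta_{\delta_i}\Delta_{\delta_j}$ at top order (ensuring ellipticity by Lemma \ref{delta+delta^*}) and so that $\Delta_{i,j}H=0$ simultaneously forces $\delta_iH=\delta_jH=(\delta_i\delta_j)^*H=0$ (ensuring the Hodge decomposition). The only point requiring mild care in the present argument is the bidegree-tracking step that identifies $\Im\,\delta_i\delta_j\cap U^{p,q}$ with the denominator $\delta_i\delta_j U^{p',q'}$ for the correct shifted bidegree $(p',q')$ appearing in each of the four definitions.
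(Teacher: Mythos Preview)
Your proposal is correct and follows essentially the same route as the paper: the paper's argument is precisely that Theorem~\ref{elliptic}(2) gives $H^{p,q}_{BC,\delta_i\delta_j}(M)\cong\mathscr{H}^{p,q}_{\Delta_{i,j}}(M)$, and then ellipticity of $\Delta_{i,j}^{p,q}$ from Theorem~\ref{elliptic}(1) forces the harmonic space to be finite-dimensional. Your bidegree-tracking remark making explicit why $\Im\,\delta_i\delta_j\cap U^{p,q}$ coincides with the denominator is a helpful elaboration that the paper leaves implicit.
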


Similarly we may define Aeppli cohomology groups for
bi-generalized complex manifolds as follows.

\begin{definition}
For a bi-generalized complex manifold $(M, \J_1, \J_2)$, we define the
Aeppli cohomology groups on $M$ to be
$$H^{p, q}_{A, \delta_+\delta_-}(M):=\frac{\mbox{Ker } \delta_+^{p+1, q-1} \delta^{p, q}_-}{\Im\delta_+^{p-1, q-1}+\Im\delta_-^{p-1, q+1}},
H^{p, q}_{A, \delta_+\overline{\delta}_-}(M):=\frac{\mbox{Ker} \delta_+^{p-1, q+1}\overline{\delta}^{p, q}_-}{\Im\delta_+^{p-1, q-1}+\Im\overline{\delta}_-^{p+1, q-1}},$$

$$H^{p, q}_{A, \overline{\delta}_+\delta_-}(M):=\frac{\mbox{Ker }\overline{\delta}^{p+1, q-1}_+ \delta_-^{p, q}}{\Im\overline{\delta}_+^{p+1, q+1}+\Im\delta_-^{p-1, q+1}},
H^{p, q}_{A,\overline{\delta}_+\overline{\delta}_-}(M):=\frac{\mbox{Ker }
\overline{\delta}_+^{p-1, q+1}\overline{\delta}^{p,
q}_-}{\Im\overline{\delta}_+^{p+1, q+1}+\Im\overline{\delta}_-^{p+1, q-1}}.$$
\end{definition}

Similar to the case of Bott-Chern cohomology groups, we have the following finiteness result.

\begin{theorem}
On a compact bi-generalized Hermitian manifold $M$, all these Aeppli cohomology groups are finite
dimensional over $\C$.
\end{theorem}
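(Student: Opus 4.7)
The plan is to mimic the proof of Theorem \ref{elliptic} using a fourth-order \emph{Aeppli Laplacian} obtained by formally swapping each operator with its $h$-adjoint in $\Delta_{i,j}$. For each pair $(i,j) \in \{(1,2),(1,3),(2,4),(3,4)\}$, I would set
$$\Delta^A_{i,j} := \delta_i^*\delta_j^*\delta_j\delta_i + \delta_j\delta_i\delta_i^*\delta_j^* + \delta_j\delta_i^*\delta_i\delta_j^* + \delta_i\delta_j^*\delta_j\delta_i^* + \delta_i\delta_i^* + \delta_j\delta_j^*,$$
and prove that $H^{p,q}_{A,\delta_i\delta_j}(M) \cong \mathscr{H}^{p,q}_{\Delta^A_{i,j}} := \ker \Delta^A_{i,j}|_{U^{p,q}}$.

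Self-adjointness of $\Delta^A_{i,j}$ is manifest from inspection of the summands. For ellipticity, the fourth-order terms reorganize, after repeated use of Proposition \ref{adjoint and commute}(4) and Corollary \ref{Kahler identity}, into the composition $\Delta_{\delta_i}\Delta_{\delta_j}$; since each $\Delta_{\delta_i}=(\delta_i+\delta_i^*)^2$ is the square of an elliptic operator by Proposition \ref{elliptic complexes} and Lemma \ref{delta+delta^*}, the product has injective principal symbol. Pairing $h(\Delta^A_{i,j}\eta,\eta)$ against $\eta$ and collecting the resulting squared norms identifies the harmonic space as
$$\mathscr{H}^{p,q}_{\Delta^A_{i,j}} = \ker\delta_i^{*,p,q} \cap \ker\delta_j^{*,p,q} \cap \ker(\delta_i\delta_j)^{p,q}.$$

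I would then establish the Hodge-type splitting
$$U^{p,q} = \mathscr{H}^{p,q}_{\Delta^A_{i,j}} \oplus (\Im\delta_i + \Im\delta_j) \oplus \Im(\delta_i\delta_j)^*.$$
The three summands are pairwise orthogonal: the harmonic component is annihilated by $\delta_i^*,\delta_j^*$ and $\delta_i\delta_j$, while the cross-term $(\Im\delta_i + \Im\delta_j) \perp \Im(\delta_i\delta_j)^*$ follows from $\delta_i^2=\delta_j^2=0$ together with $\delta_j\delta_i = -\delta_i\delta_j$ of Proposition \ref{adjoint and commute}(4). Spanning comes from the standard elliptic decomposition $U^{p,q} = \mathscr{H}^{p,q}_{\Delta^A_{i,j}} \oplus \Im\Delta^A_{i,j}$ once one observes that each summand of $\Delta^A_{i,j}$ already lies in $\Im\delta_i + \Im\delta_j + \Im(\delta_i\delta_j)^*$. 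Intersecting this splitting with $\ker(\delta_i\delta_j)$ kills the third summand, since any $\beta = (\delta_i\delta_j)^*\alpha \in \ker(\delta_i\delta_j)$ satisfies $\|\beta\|^2 = \langle\alpha,(\delta_i\delta_j)\beta\rangle = 0$. This yields
$$\ker(\delta_i\delta_j)^{p,q} = \mathscr{H}^{p,q}_{\Delta^A_{i,j}} \oplus (\Im\delta_i + \Im\delta_j),$$
and hence the desired isomorphism with the Aeppli cohomology, which is then finite-dimensional by elliptic regularity on the compact manifold $M$.

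The main obstacle is the algebraic bookkeeping required to reorganize the fourth-order terms of $\Delta^A_{i,j}$ into $\Delta_{\delta_i}\Delta_{\delta_j}$: the anti-commutation relations of Proposition \ref{adjoint and commute}(4) and Corollary \ref{Kahler identity} have restricted domains of validity depending on the ordering of $i$ and $j$, and the mixed identity $\delta_+\overline{\delta}_+ + \overline{\delta}_+\delta_+ + \delta_-\overline{\delta}_- + \overline{\delta}_-\delta_- = 0$ of Proposition \ref{adjoint and commute}(5) is needed to absorb leftover second-order cross terms. Once this computation is verified uniformly across the four admissible pairs, the Aeppli finiteness statement becomes the formal dual of Theorem \ref{BCfinite}.
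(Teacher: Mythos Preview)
Your proposal is correct and is exactly the argument the paper intends: the authors give no proof, writing only ``Similar to the case of Bott-Chern cohomology groups,'' and your Aeppli Laplacian $\Delta^A_{i,j}$ is the formal $h$-dual of $\Delta_{i,j}$, yielding the expected Hodge decomposition $\ker(\delta_i\delta_j)^{p,q}=\mathscr{H}^{p,q}_{\Delta^A_{i,j}}\oplus(\Im\delta_i+\Im\delta_j)$ by the same mechanism as Theorem~\ref{elliptic}.

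One minor remark: the worry you raise at the end is overstated. For each admissible pair $(i,j)\in\{(1,2),(1,3),(2,4),(3,4)\}$ one has $i<j$, so both parts of Corollary~\ref{Kahler identity} apply uniformly, and the fourth-order part of $\Delta^A_{i,j}$ rearranges \emph{directly} into $\Delta_{\delta_i}\Delta_{\delta_j}$ with no leftover cross terms---Proposition~\ref{adjoint and commute}(5) is not needed here, just as it was not needed in the proof of Theorem~\ref{elliptic}.
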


\begin{proposition}\label{conjugation}
On a bi-generalized complex manifold $M$, complex conjugation induces the following isomorphisms:
$$H^{p, q}_{BC, \delta_+\delta_-}(M)\cong H^{-p, -q}_{BC, \overline{\delta}_+\overline{\delta}_-}(M),
H^{p, q}_{BC, \overline{\delta}_+\delta_-}(M)\cong
H^{-p, -q}_{BC, \delta_+\overline{\delta}_-}(M)$$

$$H^{p, q}_{A, \delta_+\delta_-}(M)\cong H^{-p, -q}_{A, \overline{\delta}_+\overline{\delta}_-}(M),
H^{p, q}_{A, \overline{\delta}_+\delta_-}(M)\cong
H^{-p, -q}_{A, \delta_+\overline{\delta}_-}(M)$$
\end{proposition}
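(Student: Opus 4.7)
The plan is to take as the candidate isomorphism the conjugation map $c_{p,q}: U^{p,q} \to U^{-p,-q}$, $c_{p,q}(\alpha) = \overline{\alpha}$, which is an $\R$-linear bijection (conjugate-linear over $\C$). Two facts from Proposition~\ref{adjoint and commute} do essentially all the work: part~(1) gives $\overline{U^{p,q}} = U^{-p,-q}$, so $c_{p,q}$ is well defined on bigraded components; part~(3) gives $\overline{\delta_+\alpha} = \overline{\delta}_+\overline{\alpha}$, $\overline{\delta_-\alpha} = \overline{\delta}_-\overline{\alpha}$ (and the reverse identities), so conjugation swaps each unbarred operator with its barred partner. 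It suffices to check that $c$ sends each numerator into the corresponding target numerator and each denominator into the corresponding target denominator; then since $c \circ c$ is the identity on forms, the induced map descends to a bijection on the quotients.

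For $H^{p,q}_{BC,\delta_+\delta_-}(M) \cong H^{-p,-q}_{BC,\overline{\delta}_+\overline{\delta}_-}(M)$, if $\alpha \in U^{p,q}$ satisfies $\delta_+\alpha = \delta_-\alpha = 0$, then applying conjugation gives $\overline{\delta}_+\overline{\alpha} = \overline{\delta}_-\overline{\alpha} = 0$, so the numerator maps into $\ker\overline{\delta}_+^{-p,-q}\cap\ker\overline{\delta}_-^{-p,-q}$. If $\alpha = \delta_+\delta_-\beta$ with $\beta \in U^{p-2,q}$, then $\overline{\alpha} = \overline{\delta}_+\overline{\delta}_-\overline{\beta}$ with $\overline{\beta} \in U^{-p+2,-q}$, which is precisely the denominator $\overline{\delta}_+\overline{\delta}_- U^{(-p)+2,-q}$ of the target group. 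The second Bott-Chern pair $H^{p,q}_{BC,\overline{\delta}_+\delta_-}(M) \cong H^{-p,-q}_{BC,\delta_+\overline{\delta}_-}(M)$ is entirely analogous: conjugation carries the operator pair $(\overline{\delta}_+,\delta_-)$ to $(\delta_+,\overline{\delta}_-)$, and a short check shows $\overline{\delta}_+\delta_- U^{p,q+2}$ maps precisely to $\delta_+\overline{\delta}_- U^{-p,-q-2}$, matching the denominator of the target.

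For the Aeppli isomorphisms the same map $c$ works, modified to reflect the different shape of the defining quotient: the numerator is now the kernel of a composition, and the denominator is a sum of two images. For instance, $\ker(\delta_+\delta_-: U^{p,q}\to U^{p+2,q})$ maps under conjugation to $\ker(\overline{\delta}_+\overline{\delta}_-: U^{-p,-q}\to U^{-p-2,-q})$, while $\Im\delta_+^{p-1,q-1}+\Im\delta_-^{p-1,q+1}$ maps to $\Im\overline{\delta}_+^{-p+1,-q+1}+\Im\overline{\delta}_-^{-p+1,-q-1}$, and similarly for the other pair. There is no substantive obstacle in this proof; the entire argument is formal bidegree bookkeeping combined with Proposition~\ref{adjoint and commute}(1) and (3). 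The one mildly delicate point worth flagging is that $c$ is conjugate-linear, so the induced map between cohomology groups is strictly speaking an $\R$-linear isomorphism preserving complex dimension; this is the standard meaning of ``isomorphism induced by complex conjugation'' and is what makes the claimed statement meaningful.
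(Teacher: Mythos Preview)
Your argument is correct and is precisely the natural one: conjugation swaps $U^{p,q}$ with $U^{-p,-q}$ and intertwines each $\delta$ with its barred counterpart, so numerators and denominators of the Bott--Chern and Aeppli quotients are carried to one another. The paper in fact gives no proof of this proposition at all, evidently regarding it as an immediate consequence of Proposition~\ref{adjoint and commute}(1),(3), so your write-up simply makes explicit what the authors left tacit. One small remark: Proposition~\ref{adjoint and commute} is stated under the hypothesis that $M$ is compact bi-generalized Hermitian, whereas Proposition~\ref{conjugation} is stated for arbitrary bi-generalized complex manifolds; the parts you actually use, (1) and~(3), require only the bi-generalized complex structure (their proofs use nothing about $\G$ or compactness), so there is no genuine gap, but it would be cleaner to note this when citing them.
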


We write $H^*_{\partial}(M), H^*_{\overline{\partial}}(M), H^{*, *}_{\delta_+}(M), H^{*, *}_{\delta_-}(M)$ for the cohomology groups defined by the corresponding operators.
By the ellipticity of $\Delta_{\partial}$ and $\Delta_{\overline{\partial}}$, we see that on a compact generalized Hermitian manifold $M$, the cohomology groups $H^k_{\partial}(M)$ and $H^{k}_{\overline{\partial}}(M)$ are finite dimensional over $\C$ for any $k\in \Z$, and on a compact bi-generalized Hermitian manifold, $H^{p, q}_{\delta_+}(M)$ and $H^{p, q}_{\delta_-}(M)$ are finite dimensional over $\C$ for any $p, q\in \Z$.

\begin{theorem}(Serre duality)\label{Serre duality}
Suppose that $M$ is a compact generalized K\"ahler manifold. Then
\begin{enumerate}
\item $H^{p, q}_{\delta_+}(M)\cong (H^{-p, -q}_{\delta_+}(M))^*$, $H^{p, q}_{\delta_-}(M)\cong (H^{-p, -q}_{\delta_-}(M))^*$;
\item $H^{p, q}_{BC, \delta_+\delta_-}(M)\cong (H^{-p, -q}_{BC, \delta_+\delta_-}(M))^*$, $H^{p, q}_{BC, \delta_+\overline{\delta}_-}(M)\cong (H^{-p, -q}_{BC, \delta_+\overline{\delta}_-}(M))^*$.
\end{enumerate}
\end{theorem}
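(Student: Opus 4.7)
The plan is to construct, for each case, a non-degenerate pairing $H^{p,q}\times H^{-p,-q}\to\C$ given by the Chevalley pairing $([\alpha],[\beta])\mapsto \int_M(\alpha,\beta)_{Ch}$, and to produce the dual of a harmonic representative via the operator $\overline{\star}$. The key ingredients are the adjoint formulas in Proposition \ref{adjoint and commute}(2), the Hodge decompositions from Proposition \ref{elliptic complexes} and Theorem \ref{elliptic}, and the fact that $(U^{p,q},U^{p',q'})_{Ch}$ vanishes at top degree unless $p+p'=q+q'=0$ (because $\J_1,\J_2$ are skew with respect to the Mukai pairing).

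First I would show the pairing descends to cohomology. Lemma \ref{exact} and Stokes' theorem give $\int_M(d\alpha,\beta)_{Ch}=-\int_M(\alpha,d\beta)_{Ch}$. Decomposing $d=\delta_++\delta_-+\overline{\delta}_++\overline{\delta}_-$ and applying the bi-degree restriction, only the $\delta_+$ components survive when $\alpha\in U^{p-1,q-1}$ and $\beta\in U^{-p,-q}$, yielding
\[
\int_M(\delta_+\alpha,\beta)_{Ch}=-\int_M(\alpha,\delta_+\beta)_{Ch}.
\]
This makes the pairing well-defined on $H^{p,q}_{\delta_+}(M)\times H^{-p,-q}_{\delta_+}(M)\to\C$, and by a second application with $\alpha=\delta_-\gamma$, $\gamma\in U^{p-2,q}$, it also descends to $H^{p,q}_{BC,\delta_+\delta_-}(M)\times H^{-p,-q}_{BC,\delta_+\delta_-}(M)\to\C$; the analogous argument handles $H^{p,q}_{BC,\delta_+\overline{\delta}_-}$.

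For part (1), I would represent each nonzero class by its harmonic form $\alpha\in \mathscr{H}^{p,q}_{\Delta_{\delta_+}}$, which exists by standard Hodge theory applied to the elliptic complex of Proposition \ref{elliptic complexes}(2). A short calculation from $\delta_+^*=-\overline{\star}^{-1}\delta_+\overline{\star}$ and $\overline{\star}^2=\star^2=(-1)^n\,\mathrm{id}$ (since the Clifford volume element squares to $\pm 1$) shows that $\overline{\star}$ preserves $\ker\Delta_{\delta_+}$ and exchanges the bi-gradings $U^{p,q}\leftrightarrow U^{-p,-q}$. Hence $\overline{\star}\alpha$ is a harmonic representative of a class in $H^{-p,-q}_{\delta_+}$, and the pairing evaluates to $\int_M(\alpha,\overline{\star}\alpha)_{Ch}=h(\alpha,\alpha)>0$, proving non-degeneracy. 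The case of $\delta_-$ is identical.

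For part (2) the same strategy applies, but here lies the main obstacle: a $\Delta_{1,2}$-harmonic form $\alpha$ satisfies $\delta_\pm\alpha=(\delta_+\delta_-)^*\alpha=0$ but not a priori $\delta_+^*\alpha=\delta_-^*\alpha=0$, so $\overline{\star}\alpha$ is not automatically BC-harmonic. This is resolved by the generalized K\"ahler hypothesis $\G=-\J_1\J_2$: the generalized K\"ahler identities of Gualtieri and Cavalcanti (cf.\ \cite{G1, G2, Ca07}) give $\Delta_{\delta_+}=\Delta_{\delta_-}$, each a constant multiple of $\Delta_d$, which forces $\mathscr{H}^{p,q}_{\Delta_{1,2}}=\mathscr{H}^{p,q}_{\Delta_{\delta_+}}$. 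A BC-harmonic form is then also annihilated by $\delta_+^*$ and $\delta_-^*$, and the argument of part (1) carries over verbatim to give $\overline{\star}\alpha\in \mathscr{H}^{-p,-q}_{\Delta_{1,2}}$ and non-degeneracy; the $\Delta_{1,3}$ case follows identically.
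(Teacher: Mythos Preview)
Your argument is correct and arrives at the same non-degeneracy statement $h(\alpha,\alpha)>0$ that the paper does, but the packaging differs in two respects.

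First, the paper uses complex conjugation rather than $\overline{\star}$ as the duality partner: it pairs $\mathscr{H}^{p,q}$ with $\mathscr{H}^{-p,-q}$ via $(\alpha,\beta)\mapsto\int_M(\alpha,\star\beta)_{Ch}$ and takes $\beta=\overline{\alpha}$. To see that $\overline{\alpha}$ is again $\delta_+$-harmonic it invokes the specific generalized K\"ahler identity $\delta_+^*=-\overline{\delta}_+$ (so $\delta_+^*\alpha=0\Leftrightarrow\delta_+\overline{\alpha}=0$), rather than the general adjoint formula $\delta_+^*=-\overline{\star}^{-1}\delta_+\overline{\star}$ that you use. Since on a generalized K\"ahler manifold $\star$ acts as a scalar on each $U^{p,q}$, your partner $\overline{\star}\alpha$ is just a phase times $\overline{\alpha}$, so the two computations are really the same; your claim that $\overline{\star}$ exchanges $U^{p,q}\leftrightarrow U^{-p,-q}$ is most cleanly justified by this scalar action (it does not follow from the two ingredients you cite alone).

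Second, for part (2) the paper does not pass through the equality $\mathscr{H}^{p,q}_{\Delta_{1,2}}=\mathscr{H}^{p,q}_{\Delta_{\delta_+}}$. Instead it observes that $\Delta_{1,2}\alpha=0$ is equivalent to $\Delta_{\overline{\delta}_+\overline{\delta}_-}\overline{\alpha}=0$, and then uses the K\"ahler identities $\overline{\delta}_\pm^*=-\delta_\pm$ to rewrite the latter conditions as exactly the $\Delta_{1,2}$-harmonic conditions for $\overline{\alpha}$. This is a one-line translation and avoids your detour. Your route is also valid---the identity $\widetilde{\Delta}_{1,2}=\Delta_{\delta_+}\Delta_{\delta_-}=\Delta_{\delta_+}^2$ together with positivity gives the equality of harmonic spaces---but it is longer, and the ``obstacle'' you flag is one the paper's more direct argument simply never encounters.
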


\begin{proof}
\begin{enumerate}
\item Identifying the groups $H^{p, q}_{\delta_+}(M), H^{-p, -q}_{\delta_+}(M)$ with the groups of harmonic forms $\mathscr{H}^{p, q}_{\delta_+}(M)$, $\mathscr{H}^{-p, -q}_{\delta_+}(M)$ respectively.
Note that $\Delta_{\delta_+}\alpha=0$ if and only if $\delta_+\alpha=\delta_+^*\alpha=0$. But on a generalized K\"ahler manifold, $\delta_+^*=-\overline{\delta_+}$. So we have
$\delta_+^*\alpha=0$ if and only if $\delta_+\overline{\alpha}=0$. Hence $\alpha\in \mathscr{H}^{p, q}_{\delta_+}(M)$ if and only if $\overline{\alpha}\in \mathscr{H}^{-p, -q}_{\delta_+}(M)$. Taking $\beta=\overline{\alpha}$, the pairing $\mathscr{H}^{p, q}_{\delta_+}(M)\times \mathscr{H}^{-p, -q}_{\delta_+}(M) \rightarrow \C$ given by $(\alpha, \beta)\mapsto \int_M(\alpha, \star\beta)_{Ch}=h(\alpha, \overline{\beta})$
is nondegenerate.
\item Identifying the groups $H^{p, q}_{BC, \delta_+\delta_-}(M), H^{-p, -q}_{BC, \delta_+\delta_-}(M)$ with the groups of harmonic forms $\mathscr{H}^{p, q}_{BC, \delta_+\delta_-}(M)$, $\mathscr{H}^{-p, -q}_{BC, \delta_+\delta_-}(M)$ respectively. From the proof of Theorem \ref{elliptic}, we have $\Delta_{\delta_+\delta_-}\alpha=0$ if and only if $(\delta_+\delta_-)^*\alpha=\delta_+\alpha=\delta_-\alpha=0$. Since
$\Delta_{\delta_+\delta_-}\alpha=0$ is equivalent to $\Delta_{\overline{\delta_+}\overline{\delta}_-}\overline{\alpha}=0$, using the equalities $\overline{\delta_+}^*=-\delta_+, \overline{\delta_-}^*=-\delta_-$, we
see that $\alpha\in \mathscr{H}^{p, q}_{BC, \delta_+\delta_-}(M)$ if and only if $\overline{\alpha}\in \mathscr{H}^{-p, -q}_{BC, \delta_+\delta_-}(M)$. Hence again by taking $\beta=\overline{\alpha}$, we see that the pairing $\mathscr{H}^{p, q}_{BC, \delta_+\delta_-}(M)\times \mathscr{H}^{-p, -q}_{BC, \delta_+\delta_-}(M) \rightarrow \C$ given by $(\alpha, \beta)\mapsto \int_M(\alpha, \star\beta)_{Ch}$ is nondegenerate.
\end{enumerate}
\end{proof}

\section{Aeppli, Bott-Chern cohomology and $d'd''$-lemma}
For a given double complex $(\mathcal{A}, \dif',\dif'')$, we have the following cohomology groups
$$H^k(\mathcal{A})=\frac{\ker \dif\cap\mathcal{A}^k}{\dif\mathcal{A}^{k-1}}, H_{\dif'}^{p,q}(\mathcal{A})=\frac{\ker \dif'\cap\mathcal{A}^{p,q}}{\dif'\mathcal{A}^{p-1,q}}, H_{\dif''}^{p,q}(\mathcal{A})=\frac{\ker \dif''\cap\mathcal{A}^{p,q}}{\dif''\mathcal{A}^{p,q-1}}$$
By definition, it is clear that
$$H_{\dif''}^{p,q}(\mathcal{A})\cong E_1^{p,q}, H_{\dif'}^{p,q}(\mathcal{A})\cong \overline{E}_1^{p,q}$$

There are other cohomology groups naturally arose from double complexes.
The Bott-Chern and Aeppli cohomology groups are defined as
$$H_{BC}^{p,q}(\mathcal{A})=\frac{\ker \dif'\cap \ker \dif''\cap\mathcal{A}^{p,q}}{\Im \dif'\dif''\cap\mathcal{A}^{p,q}},
H_A^{p,q}(\mathcal{A})=\frac{\ker \dif'\dif''\cap\mathcal{A}^{p,q}}{(\Im \dif'+\Im \dif'')\cap\mathcal{A}^{p,q}}$$

Let $h^k:=\dim_\C H^k(\mathcal{A})$ and
$$h_{\sharp}^{p,q}:=dim_\C H_{\sharp}^{p,q}(\mathcal{A}), h^k_{\sharp}=\sum_{p+q=k}h_{\sharp}^{p,q}, \mbox{ where } \sharp= \dif', \dif'',BC,A$$

Consider the following diagrams
\begin{equation*}
\xymatrix{ & \ker \dif'\cap\ker \dif''\cap \mathcal{A}^{p, q}\\
& \ker \dif'\cap \ker \dif''\cap (\Im \dif'+\Im \dif'')\cap \mathcal{A}^{p, q}\ar@{^(->}[u]_{\widetilde{p}^{p, q}_0}\\
\Im \dif'\cap \ker \dif''\cap \mathcal{A}^{p, q}\ar@{^(->}[ur]^{\widetilde{p}^{p, q}_+} && \ker \dif'\cap \Im \dif''\cap \mathcal{A}^{p, q}\ar@{_(->}[ul]_{\widetilde{p}^{p, q}_-}\\
&\Im \dif'\cap \Im \dif''\cap \mathcal{A}^{p, q}\ar@{_(->}[ul]^{\widetilde{s}^{p, q}_+}\ar@{^(->}[ur]_{\widetilde{s}^{p, q}_-} \\
& \Im \dif'\dif''\cap \mathcal{A}^{p, q}\ar@{^(->}[u]_{\widetilde{s}^{p, q}_0}\ar@/_21pc/[uuuu]_{\widetilde{h}^{p, q}_{BC}}}
\end{equation*}

\begin{equation*}
\xymatrix{ & \ker \dif'\dif''\cap \mathcal{A}^{p, q}\\
& (\ker \dif'+ \ker \dif'')\cap \mathcal{A}^{p, q}\ar@{^(->}[u]_{\widetilde{u}^{p, q}_0}\\
(\ker \dif'+ \Im \dif'')\cap \mathcal{A}^{p, q}\ar@{^(->}[ur]^{\widetilde{u}^{p, q}_+} && \Im \dif'+ \ker \dif''\ar@{_(->}[ul]_{\widetilde{u}^{p, q}_-}\\
& (\Im \dif'+ \Im \dif''+(\ker \dif'\cap\ker \dif''))\cap \mathcal{A}^{p, q}\ar@{_(->}[ul]^{\widetilde{v}^{p, q}_+}\ar@{^(->}[ur]_{\widetilde{v}^{p, q}_-} \\
& (\Im \dif'+\Im \dif'')\cap \mathcal{A}^{p, q}\ar@{^(->}[u]_{\widetilde{v}_0}\ar@/_21pc/[uuuu]_{\widetilde{h}^{p, q}_{A}}}
\end{equation*}

If $\widetilde{a}^{p, q}$ is a homomorphism in the above diagrams, we use $a^{p, q}$ to denote the dimension of coimage.
For instance, $s_+^{p,q}={\rm dim}\dfrac{\Im \dif'\cap \ker \dif''\cap\mathcal{A}^{p,q}}{\Im \dif'\cap \Im \dif''\cap\mathcal{A}^{p,q}}$.

Simple observation gives us some equalities.
\begin{lemma}
For any $p, q$, we have
\begin{enumerate}
\item $p^{p, q}_+=s^{p, q}_-, p^{p, q}_-=s^{p, q}_+$;
\item $h^{p, q}_{BC}=p^{p, q}_0+p^{p, q}_++s^{p, q}_++s^{p, q}_0=p^{p, q}_0+p^{p, q}_-+s^{p, q}_-+s^{p, q}_0$;
\item $u^{p, q}_+=v^{p, q}_-, u^{p, q}_-=v^{p, q}_+$;
\item $h^{p, q}_{A}=u^{p, q}_0+u^{p, q}_++v^{p, q}_++v^{p, q}_0=u^{p, q}_0+u^{p, q}_-+v^{p, q}_-+v^{p, q}_0$;
\item $p^{p, q}_0=v^{p, q}_0$.
\end{enumerate}
\end{lemma}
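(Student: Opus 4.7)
The plan is to prove all five assertions by purely diagrammatic reasoning: each equality arises from the second isomorphism theorem $A/(A\cap B)\cong (A+B)/B$ applied to the lattice of subspaces depicted in the diagrams, together with a handful of intersection identifications that exploit only $\Im d'\subseteq\ker d'$ and $\Im d''\subseteq\ker d''$ (both consequences of $(d'+d'')^2=0$). Throughout I will abbreviate $K' = \ker d'$, $K'' = \ker d''$, $I' = \Im d'$, $I'' = \Im d''$, $K = K'\cap K''$, all restricted to $\mathcal{A}^{p,q}$.

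For (1) I would first establish the key identity
$$K\cap (I'+I'') \;=\; (I'\cap K'') + (K'\cap I''),$$
whose nontrivial inclusion uses that if $a=b+c$ with $b\in I'$, $c\in I''$, and $d'a=d''a=0$, then $d'c=d''b=0$ automatically. The second isomorphism theorem then gives
$$\frac{K\cap (I'+I'')}{I'\cap K''} \;\cong\; \frac{K'\cap I''}{(K'\cap I'')\cap (I'\cap K'')} \;=\; \frac{K'\cap I''}{I'\cap I''},$$
so $p^{p,q}_+=s^{p,q}_-$; reversing the roles of $d'$ and $d''$ gives $p^{p,q}_-=s^{p,q}_+$. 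For (2) I would telescope dimensions along the left chain $\Im d'd''\subset I'\cap I''\subset I'\cap K''\subset K\cap (I'+I'')\subset K$, whose successive quotients have dimensions $s^{p,q}_0,\,s^{p,q}_+,\,p^{p,q}_+,\,p^{p,q}_0$; summing yields the first expression for $h^{p,q}_{BC}$, and the second expression follows from the symmetric right chain together with (1).

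For (3) I would identify
$$K''\cap (K'+I'') = K+I'' \quad\text{and}\quad K''\cap (I'+I''+K) = (I'\cap K'') + I'' + K,$$
both using that $I'' \subseteq K''$ together with the fact that only the $I'$-summand can fail $d''$-closedness. The second isomorphism theorem then yields $u^{p,q}_+=\dim K''/(K+I'')$ and $v^{p,q}_-=\dim K''/((I'\cap K'')+I''+K)$. Equality reduces to verifying that the inclusion $(I'+K'')\hookrightarrow(K'+K'')$ induces an isomorphism at the quotient level; the only step with real content is injectivity, where from $i'+k'' = k'+i''$ one solves $k''=(k'-i')+i''$ with $k'-i'\in K'\cap K''=K$, placing $i'+k''$ in $I'+I''+K$. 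Assertion (4) then follows by telescoping the right-hand chain $I'+I''\subset I'+I''+K\subset I'+K''\subset K'+K''\subset \ker d'd''$ with successive dimensions $v^{p,q}_0,v^{p,q}_-,u^{p,q}_-,u^{p,q}_0$ and invoking (3), while (5) is a direct application of the second isomorphism theorem to $A=K$ and $B=I'+I''$:
$$\frac{K}{K\cap (I'+I'')}\;\cong\;\frac{K+I'+I''}{I'+I''}.$$
The main obstacle throughout is precisely the bookkeeping in these intersection identifications; in (3) in particular one must notice that among the summands in $I'+I''+K$ only the $I'$-piece is not a priori $d''$-closed, so intersecting with $\ker d''$ forces that piece into $I'\cap K''$, and it is this single observation that makes the otherwise symmetric-looking quotients line up.
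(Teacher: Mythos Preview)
Your argument is correct and is precisely the kind of ``simple observation'' the paper invokes without spelling out; indeed your key identities $K'\cap K''\cap(I'+I'')=(I'\cap K'')+(K'\cap I'')$ and $(I'+K'')\cap(K'+I'')=I'+I''+(K'\cap K'')$ are exactly the content of the paper's subsequent Lemma~3.2(2),(3), and your telescoping chains together with the second isomorphism theorem are the intended mechanism. One small redundancy: in part~(3), once you have reduced both $u^{p,q}_+$ and $v^{p,q}_-$ to quotients of $K''$ with denominators $K+I''$ and $(I'\cap K'')+I''+K$ respectively, equality is immediate from $I'\cap K''\subseteq K'\cap K''=K$, so the separate injectivity verification via $(I'+K'')\hookrightarrow(K'+K'')$ is not needed (though it is also valid).
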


\begin{lemma}\label{group}
Let $G', G$ be abelian subgroups of some abelian group and $H'\subset G', H\subset G$ be some subgroups. We have
\begin{enumerate}
\item
$\frac{G'}{G'\cap G}\cong\frac{G'+G}{G}$
\item
$(G'+H)\cap (G+H')=H'+H+(G'\cap G)\label{algebra1}$
\item
$(G'\cap H)+ (G\cap H')=G'\cap G\cap (H'+H)\label{algebra2}$
\end{enumerate}
\end{lemma}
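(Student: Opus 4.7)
The plan is to prove each part by elementary manipulations, leaning throughout on the standing hypotheses $H' \subseteq G'$ and $H \subseteq G$. Part (1) is an instance of the second isomorphism theorem. I would define $\varphi : G' \to (G'+G)/G$ as the restriction of the canonical projection $G'+G \twoheadrightarrow (G'+G)/G$; since $g'+g+G = g'+G$ for any $g' \in G', g \in G$, the map $\varphi$ is surjective, and its kernel is plainly $G' \cap G$, so the first isomorphism theorem delivers the conclusion.

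For part (2) I would prove the two inclusions separately. The inclusion $\supseteq$ is direct: an element $h' + h + z$ with $h' \in H'$, $h \in H$, $z \in G' \cap G$ regroups as $(z + h') + h$, which lies in $G' + H$ because $H' \subseteq G'$, and as $(z + h) + h'$, which lies in $G + H'$ because $H \subseteq G$. For $\subseteq$, take $x \in (G'+H) \cap (G+H')$ and write $x = g' + h = g + h'$. The key observation is that rearranging gives $g' - h' = g - h$: the left-hand expression lies in $G'$ (using $h' \in H' \subseteq G'$) and the right-hand one lies in $G$ (using $h \in H \subseteq G$), so this common element sits in $G' \cap G$. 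Then $x = (g' - h') + h + h' \in (G' \cap G) + H + H'$.

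For part (3), I would again use a double inclusion. For $\subseteq$, any $a \in G' \cap H$ satisfies $a \in G' \cap G$ (since $H \subseteq G$) and $a \in H + H'$, and symmetrically for $b \in G \cap H'$, so $(G' \cap H) + (G \cap H') \subseteq G' \cap G \cap (H' + H)$. For $\supseteq$, write $x \in G' \cap G \cap (H' + H)$ as $x = h' + h$ with $h' \in H'$, $h \in H$; then $h' = x - h$ lies in $G$ (since $x \in G$ and $h \in H \subseteq G$), giving $h' \in G \cap H'$, and $h = x - h'$ lies in $G'$ (since $x \in G'$ and $h' \in H' \subseteq G'$), giving $h \in G' \cap H$. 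Hence $x \in (G' \cap H) + (G \cap H')$.

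The whole lemma is routine bookkeeping; the only mild subtlety, and it is confined to part (2), is to notice the identity $g' - h' = g - h$ derived from two different decompositions of the same $x$, and to recognize that this common element must automatically land in $G' \cap G$, which is exactly what makes the decomposition $x = (g' - h') + h + h'$ work.
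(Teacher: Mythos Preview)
Your proof is correct and complete. The paper itself states this lemma without proof, treating it as an elementary fact, so there is no argument in the paper to compare against; your elementary double-inclusion arguments for parts (2) and (3) and the invocation of the second isomorphism theorem for part (1) are exactly the sort of routine verification the authors evidently had in mind.
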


\begin{lemma}\label{group2}
If $G'$ is an abelian group and $H<G ,H<H'$ are subgroups of $G'$, the natural map
$\varphi: \dfrac{G}{H}\rightarrow \dfrac{G'}{H'}$
is injective if and only if $G\cap H'=H$, and is surjective if and only if $G'=G+H'$.
\end{lemma}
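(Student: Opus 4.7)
The plan is to prove both equivalences by directly identifying the kernel and image of $\varphi$, after which each statement falls out of the basic isomorphism theorems for abelian groups. Since every object in sight is a subgroup of $G'$, the whole argument is a matter of careful coset bookkeeping inside $G'$.

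First I would check that $\varphi$ is well-defined: if $g_1 - g_2 \in H$ then, because $H \subset H'$, the cosets $g_1 + H'$ and $g_2 + H'$ coincide in $G'/H'$, so the assignment $\varphi(g + H) := g + H'$ is unambiguous. For the injectivity claim, the kernel consists of those cosets $g + H$ with $g \in G \cap H'$, giving $\ker \varphi = (G \cap H')/H$; this quotient makes sense because $H \subset G \cap H'$ is automatic from the hypotheses $H \subset G$ and $H \subset H'$. Therefore $\varphi$ is injective if and only if $G \cap H' \subset H$, and combined with the automatic reverse inclusion this is equivalent to $G \cap H' = H$.

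For the surjectivity claim, the image of $\varphi$ is precisely the set of cosets $\{g + H' : g \in G\}$, which is the subgroup $(G + H')/H'$ of $G'/H'$. Hence $\varphi$ is surjective if and only if $(G + H')/H' = G'/H'$, that is, if and only if $G + H' = G'$.

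There is no real obstacle: the argument is a routine exercise in coset arithmetic, and the only point requiring a moment's care is verifying the inclusions needed for the two quotients $(G \cap H')/H$ and $(G + H')/H'$ to be legitimate, which is exactly why the hypotheses $H \subset G$ and $H \subset H'$ are imposed.
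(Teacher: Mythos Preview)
Your proof is correct and complete. The paper states this lemma without proof, treating it as an elementary fact about abelian groups, so there is no argument to compare against; your direct computation of the kernel and image is exactly the standard verification.
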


There are relations involving both diagrams and the cohomology $H_{\dif'}, H_{\dif''}$.
\begin{lemma}
For any $p, q$, there are equalities
$$h^{p, q}_{\dif'}=s^{p, q}_-+v^{p, q}_++v^{p, q}_0, h^{p, q}_{\dif''}=s^{p, q}_++v^{p, q}_-+v^{p, q}_0$$
\end{lemma}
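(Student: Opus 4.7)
The plan is to establish the first identity $h^{p,q}_{\dif'} = s^{p,q}_- + v^{p,q}_+ + v^{p,q}_0$ via a short exact sequence of finite-dimensional vector spaces; the second identity will then follow by symmetry, interchanging the roles of $\dif'$ and $\dif''$ throughout the argument. Let me abbreviate $K := \ker \dif'\cap \mathcal{A}^{p,q}$, $I := \Im \dif'\cap \mathcal{A}^{p,q}$, and $J := \Im \dif''\cap \mathcal{A}^{p,q}$, so that $I\subseteq K$ since $(\dif')^2 = 0$. From the definitions and the two diagrams one reads off $h^{p,q}_{\dif'} = \dim K/I$, $s^{p,q}_- = \dim (K\cap J)/(I\cap J)$ from the edge $\widetilde{s}^{p,q}_-$, and $v^{p,q}_++v^{p,q}_0 = \dim (K+J)/(I+J)$ by composing the two edges $\widetilde{v}_0$ and $\widetilde{v}^{p,q}_+$, using the bi-homogeneity of $\ker \dif'$ and $\Im \dif''$ to distribute the intersection with $\mathcal{A}^{p,q}$ over the ambient sums.

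I would then construct the exact sequence
\begin{equation*}
0 \longrightarrow \frac{K\cap J}{I\cap J} \xrightarrow{\iota} \frac{K}{I} \xrightarrow{\pi} \frac{K+J}{I+J} \longrightarrow 0,
\end{equation*}
where $\iota$ and $\pi$ are induced by the inclusions $K\cap J\hookrightarrow K$ and $K\hookrightarrow K+J$. Injectivity of $\iota$ follows from $(K\cap J)\cap I = I\cap J$, and surjectivity of $\pi$ follows from Lemma \ref{group2} together with $K+J = K+(I+J)$, which holds since $I\subseteq K$. For exactness at the middle term, the image of $\iota$ equals $(I+(K\cap J))/I$, while Lemma \ref{group}(1) applied with $G' = K$ and $G = I+J$ identifies $\ker \pi$ with $K/(K\cap(I+J))$. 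Matching these two descriptions reduces the whole argument to the modular identity
\begin{equation*}
K\cap(I+J) = I+(K\cap J).
\end{equation*}

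The main, and only nontrivial, step is this last identity, which is the one place the double-complex hypothesis enters in an essential way. The inclusion $\supseteq$ is automatic from $I\subseteq K$ and $K\cap J\subseteq K$. For $\subseteq$, given $x\in K\cap(I+J)$ written as $x = i+j$ with $i\in I$ and $j\in J$, apply $\dif'$ to obtain $\dif' j = \dif' x - \dif' i = 0$, since $x\in \ker \dif'$ and $i\in I\subseteq \ker \dif'$; therefore $j\in K\cap J$ and $x = i+j\in I+(K\cap J)$. Once this is in hand, counting dimensions in the exact sequence gives the first identity, and rerunning the argument with $\dif'$ and $\dif''$ exchanged (so that $K$ becomes $\ker \dif''\cap \mathcal{A}^{p,q}$, $I$ becomes $\Im \dif''\cap \mathcal{A}^{p,q}$, $J$ becomes $\Im \dif'\cap \mathcal{A}^{p,q}$, and the roles of $s_-, v_+$ are exchanged with $s_+, v_-$) yields the second.
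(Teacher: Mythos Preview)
Your proof is correct and follows essentially the same route as the paper: both arguments place $H^{p,q}_{\dif'}=K/I$ in the middle of a short exact sequence whose outer terms compute $s^{p,q}_-$ and $v^{p,q}_++v^{p,q}_0$, with the key step being the modular identity $K\cap(I+J)=I+(K\cap J)$, which follows from $I\subseteq K$ (i.e.\ $(\dif')^2=0$). The paper writes the tautological filtration sequence $0\to (K\cap(I+J))/I\to K/I\to K/(K\cap(I+J))\to 0$ first and then identifies the ends via Lemma~\ref{group}, whereas you write the identified sequence directly and verify exactness; aside from a small slip in wording (what Lemma~\ref{group}(1) identifies is the \emph{target} $(K+J)/(I+J)$ with $K/(K\cap(I+J))$, whence $\ker\pi=(K\cap(I+J))/I$), the two arguments are the same.
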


\begin{proof}
Consider the following short exact sequence
$$0\rightarrow \frac{\ker \dif'\cap (\Im \dif'+\Im \dif'')\cap \mathcal{A}^{p, q}}{\Im \dif'\cap \mathcal{A}^{p, q}}\rightarrow H^{p, q}_{\dif'}\rightarrow \frac{\ker \dif'\cap \mathcal{A}^{p, q}}{\ker \dif'\cap(\Im \dif'+\Im \dif'')\cap \mathcal{A}^{p, q}}\rightarrow 0$$

Using Lemma \ref{group}, \ref{group2}, we have
$$\frac{\ker \dif'\cap (\Im \dif'+\Im \dif'')\cap \mathcal{A}^{p, q}}{\Im \dif'\cap \mathcal{A}^{p, q}}\cong \frac{\ker \dif'\cap \Im \dif''\cap \mathcal{A}^{p, q}}{\Im \dif'\cap \Im \dif''\cap \mathcal{A}^{p, q}},$$
$$\frac{\ker \dif'\cap \mathcal{A}^{p, q}}{\ker \dif'\cap(\Im \dif'+\Im \dif'')\cap \mathcal{A}^{p, q}}\cong \frac{(\ker \dif'+\Im \dif'')\cap \mathcal{A}^{p, q}}{(\Im \dif'+\Im \dif'')\cap \mathcal{A}^{p, q}}$$
The equality is given by the alternating sum of dimensions of the short exact sequence and equalities obtained before.
\end{proof}

\begin{corollary}\label{compare}
For any $p, q$, there are equalities
$$h^{p, q}_{BC}+h^{p, q}_{A}=h^{p, q}_{\dif'}+h^{p, q}_{\dif''}+u^{p, q}_0+s^{p, q}_0.$$
In particular, $h^{p, q}_{BC}+h^{p, q}_{A}\geq h^{p, q}_{\dif'}+h^{p, q}_{\dif''}$ for any $p, q$.
\end{corollary}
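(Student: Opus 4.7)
The plan is that this corollary is purely a bookkeeping consequence of the three preceding lemmas, requiring no new constructions: both $h^{p,q}_{BC}+h^{p,q}_{A}$ and $h^{p,q}_{\dif'}+h^{p,q}_{\dif''}$ have already been expressed as explicit sums of the eight auxiliary dimensions $p^{p,q}_0, p^{p,q}_{\pm}, s^{p,q}_0, s^{p,q}_{\pm}$ and $u^{p,q}_0, u^{p,q}_{\pm}, v^{p,q}_0, v^{p,q}_{\pm}$ attached to the two coimage diagrams. The identities (1), (3) and (5) of the first Lemma provide exactly enough relations among these quantities to collapse the difference of the two sums down to the two ``unpaired'' terms $u^{p,q}_0$ and $s^{p,q}_0$.

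Concretely, I would start by fixing one of the two equivalent expressions for each of $h^{p,q}_{BC}$ and $h^{p,q}_{A}$ from the first Lemma. A convenient choice is the asymmetric pairing
$$h^{p,q}_{BC}=p^{p,q}_0+p^{p,q}_++s^{p,q}_++s^{p,q}_0,\qquad h^{p,q}_{A}=u^{p,q}_0+u^{p,q}_-+v^{p,q}_-+v^{p,q}_0,$$
chosen so that the substitutions required are minimized. Next I would apply $p^{p,q}_+=s^{p,q}_-$ and $u^{p,q}_-=v^{p,q}_+$ from items (1) and (3), together with $p^{p,q}_0=v^{p,q}_0$ from item (5), to rewrite $h^{p,q}_{BC}+h^{p,q}_{A}$ entirely in the variables $s^{p,q}_{\pm},v^{p,q}_{\pm},v^{p,q}_0$ plus the extra summands $u^{p,q}_0$ and $s^{p,q}_0$. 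At that point the five ``paired'' terms will assemble exactly into $h^{p,q}_{\dif'}+h^{p,q}_{\dif''}$ as given by the last Lemma, leaving the claimed identity.

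The stated inequality then follows with no further work, because $u^{p,q}_0$ and $s^{p,q}_0$ are by definition dimensions of coimages of inclusions of subspaces of $\mathcal{A}^{p,q}$, hence nonnegative integers.

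I do not anticipate any genuine obstacle here: the proof is a one-line substitution once the correct pairing of the two available expressions for $h^{p,q}_{BC}$ and $h^{p,q}_{A}$ is made. The only minor judgement call is the choice of that pairing, but every choice works provided one is consistent about which identities from item (1) of the first Lemma are invoked afterwards.
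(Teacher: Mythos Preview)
Your proposal is correct and matches the paper's intended argument: the corollary is stated without proof precisely because it is the direct combination of items (1)--(5) of the first Lemma with the formulas for $h^{p,q}_{\dif'}$ and $h^{p,q}_{\dif''}$ in the preceding Lemma, exactly as you outline. One small slip in your narrative: after substitution there are six terms (not five) assembling into $h^{p,q}_{\dif'}+h^{p,q}_{\dif''}$, but your computation itself is right.
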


\begin{definition}\label{dd-lemma}
We say that a double complex $(\mathcal{A}, \dif', \dif'')$ satisfies the $\dif'\dif''$-lemma at $(p,q)$ if
$$\Im \dif'\cap \ker \dif''\cap \mathcal{A}^{p,q}=\ker \dif'\cap \Im \dif''\cap \mathcal{A}^{p,q}=\Im \dif'\dif''\cap \mathcal{A}^{p,q}$$
and $\mathcal{A}$ satisfies the $\dif'\dif''$-lemma if $\mathcal{A}$ satisfies the $\dif'\dif''$-lemma at $(p,q)$ for all $(p,q)$.
\end{definition}

Now we consider following maps induced naturally by inclusions and quotients:
\begin{align*}
&\phi^{p, q}:H^{p, q}_{BC}(\mathcal{A})\stackrel{}{\hookrightarrow}\frac{\ker \dif \cap \mathcal{A}^{p, q}}{\Im \dif'\dif''\cap \mathcal{A}^{p, q}} \stackrel{}{\twoheadrightarrow} H^{p+q}(\mathcal{A}),\\
&\phi^k:=\sum_{p+q=k}\phi^{p, q}:\underset{p+q=k}{\bigoplus}H^{p, q}_{BC}(\mathcal{A})\hookrightarrow \frac{\ker \dif\cap \mathcal{A}^k}{\Im \dif'\dif''\cap \mathcal{A}^k} \twoheadrightarrow H^k(\mathcal{A}), \\
&\psi^k:H^k(\mathcal{A})\stackrel{}{\hookrightarrow} \frac{\ker \dif'\dif'' \cap \mathcal{A}^k}{\Im \dif \cap \mathcal{A}^k} \stackrel{}{\twoheadrightarrow} \underset{p+q=k}{\bigoplus} H^{p, q}_A(\mathcal{A}),\\
&\phi^{p, q}_+:H^{p, q}_{BC}(\mathcal{A})\stackrel{}{\hookrightarrow}\frac{\ker \dif' \cap \mathcal{A}^{p, q}}{\Im \dif'\dif'' \cap \mathcal{A}^{p, q}} \stackrel{}{\twoheadrightarrow} H^{p, q}_{\dif'}(\mathcal{A}), \phi^{p, q}_-:H^{p, q}_{BC}(\mathcal{A})\stackrel{}{\hookrightarrow}\frac{\ker \dif'' \cap \mathcal{A}^{p, q}}{\Im \dif'\dif'' \cap \mathcal{A}^{p, q}} \stackrel{}{\twoheadrightarrow} H^{p, q}_{\dif''}(\mathcal{A}),\\
&\psi^{p, q}_+:H^{p, q}_{\dif'}(\mathcal{A})\stackrel{}{\hookrightarrow} \frac{\ker \dif'\dif'' \cap \mathcal{A}^{p, q}}{\Im \dif' \cap \mathcal{A}^{p, q}} \stackrel{}{\twoheadrightarrow} H^{p, q}_A(\mathcal{A}), \psi^{p, q}_-:H^{p, q}_{\dif'}(\mathcal{A})\stackrel{}{\hookrightarrow} \frac{\ker \dif'\dif'' \cap \mathcal{A}^{p, q}}{\Im \dif'' \cap \mathcal{A}^{p, q}} \stackrel{}{\twoheadrightarrow} H^{p, q}_A(\mathcal{A}).
\end{align*}

The first result in the following was mentioned in the paper (\cite{DGMS}).
\begin{lemma}\label{bc+derham}
\begin{enumerate}
\item
 $\dif'\dif''$-lemma at $\mathcal{A}^k$ holds $\Leftrightarrow$ $\phi^k$ is injective $\Leftrightarrow$ $\psi^{k-1}$ is surjective.
\item
$\phi^k$ is injective $\Rightarrow$ $\phi^{k-1}$ is surjective.
\item
$\psi^k$ is surjective $\Rightarrow \psi^{k+1}$ is injective.
\end{enumerate}
\end{lemma}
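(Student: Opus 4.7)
The plan is to prove (1) by a cycle of implications among the three equivalent conditions, and then derive (2) and (3) by exploiting the $\dif'\dif''$-lemma characterization in (1). A preliminary observation I will use throughout is that the paper's formulation of the $\dif'\dif''$-lemma (at a bidegree) is equivalent to the symmetric statement $(\Im \dif' + \Im \dif'') \cap \ker \dif' \cap \ker \dif'' = \Im \dif'\dif''$ there, which follows from anticommutativity of $\dif'$ and $\dif''$.

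For (1), I would first show $\dif'\dif''$-lemma at $\mathcal{A}^k \Rightarrow \phi^k$ injective: any $\alpha = \dif\beta \in \ker \dif'\cap\ker \dif''\cap\mathcal{A}^k$ decomposes bidegree-wise as $\alpha^{p,q} = \dif'\beta^{p-1,q} + \dif''\beta^{p,q-1}$, which lies in the symmetric intersection above and hence in $\Im \dif'\dif''$. The converse is obtained by a paired-component trick: given $\alpha = \dif'\beta \in \ker \dif''\cap\mathcal{A}^{p,q}$ with $p+q=k$, set $\tilde\alpha := \alpha + \dif''\beta = \dif\beta$; both $\alpha$ and $\dif''\beta$ lie in $\ker \dif'\cap\ker \dif''$, so $\phi^k$-injectivity forces each to be $\dif'\dif''$-exact. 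For the bridge to $\psi^{k-1}$, the key observation is that $\alpha = \dif\beta$ with $\alpha \in \ker \dif'$ forces $\dif'\dif''\beta = \dif'\alpha = 0$, so $\beta \in \ker \dif'\dif''$; assuming $\psi^{k-1}$ surjective, write $\beta = \gamma + \dif'\mu + \dif''\nu$ with $\gamma \in \ker \dif$, giving $\alpha = \dif\beta = \dif'\dif''(\nu - \mu)$ and so $\phi^k$ injective.

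The reverse direction, $\dif'\dif''$-lemma at $\mathcal{A}^k \Rightarrow \psi^{k-1}$ surjective, is the most delicate step: starting from $\alpha \in \ker \dif'\dif''\cap\mathcal{A}^{k-1}$, I would use the $\dif'\dif''$-lemma at each bidegree $(p,q+1)$ to write $\dif''\alpha^{p,q} = \dif'\dif''\mu^{p-1,q}$, so that $\beta := \alpha + \dif'\mu$ satisfies $\dif''\beta = 0$; then a second correction by $-\dif''\nu$, where $\dif'\beta^{p,q} = \dif'\dif''\nu^{p,q-1}$ (again available from the $\dif'\dif''$-lemma), yields a form in $\ker \dif'\cap\ker \dif'' \subseteq \ker \dif$ differing from $\alpha$ by an element of $\Im \dif' + \Im \dif''$.

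For (2), I would induct on the spread $s - r$ of $\alpha = \sum_{p=r}^{s}\alpha^{p,k-1-p} \in \ker \dif\cap\mathcal{A}^{k-1}$. The base $s=r$ is immediate since the single-bidegree condition forces $\dif'\alpha = \dif''\alpha = 0$. In the inductive step, maximality of $s$ gives $\dif'\alpha^{s,k-1-s} = 0$, while the bidegree-$(s,k-s)$ piece of $\dif\alpha = 0$ gives $\dif''\alpha^{s,k-1-s} = -\dif'\alpha^{s-1,k-s} \in \Im \dif'\cap\ker \dif''$; invoking the $\dif'\dif''$-lemma at $\mathcal{A}^k$ (from (1)), write $\dif''\alpha^{s,k-1-s} = \dif'\dif''\xi$ and replace $\alpha$ by $\alpha + \dif\xi$, so that the new top component $\alpha^{s,k-1-s} + \dif'\xi$ lies in $\ker \dif'\cap\ker \dif''$; subtracting it off leaves a $\dif$-closed form of strictly smaller spread, to which induction applies. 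For (3), use (1) to convert $\psi^k$ surjective into the $\dif'\dif''$-lemma at $\mathcal{A}^{k+1}$; then given $\beta \in \ker \dif \cap (\Im \dif' + \Im \dif'')\cap\mathcal{A}^{k+1}$ with $\beta = \dif'\mu+\dif''\nu$, the identity $\dif\beta = \dif'\dif''(\nu - \mu) = 0$ places $\dif''(\nu-\mu)$ in $\Im \dif''\cap\ker \dif' = \Im \dif'\dif''$, and writing this as $\dif'\dif''\zeta = \dif(\dif''\zeta)$ gives $\beta = \dif\mu + \dif''(\nu-\mu) = \dif(\mu+\dif''\zeta) \in \Im \dif$. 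The main obstacle is the $\dif'\dif''$-lemma $\Rightarrow \psi^{k-1}$ surjective direction in (1), which requires the two-step zigzag correction above; once the equivalences in (1) are set up, (2) and (3) reduce to bookkeeping of bidegrees.
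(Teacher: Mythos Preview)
Your proof is correct. For part (3) your argument coincides with the paper's up to renaming. The differences lie in (1) and (2).

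For (1), the paper runs a single three-step cycle: $\dif'\dif''$-lemma $\Rightarrow$ $\phi^k$ injective $\Rightarrow$ $\psi^{k-1}$ surjective $\Rightarrow$ $\dif'\dif''$-lemma. In particular, what you identify as the most delicate step---the two-step zigzag from the $\dif'\dif''$-lemma to surjectivity of $\psi^{k-1}$---is bypassed: the paper goes from $\phi^k$ injective to $\psi^{k-1}$ surjective in one move, observing that for $\alpha \in \ker \dif'\dif'' \cap \mathcal{A}^{k-1}$ one has $\dif\alpha \in \ker \dif' \cap \ker \dif'' \cap \Im \dif \cap \mathcal{A}^k = \Im \dif'\dif'' \cap \mathcal{A}^k$, so $\dif\alpha = \dif'\dif''\beta$ and $\alpha - \dif''\beta \in \ker \dif$. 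Your paired-component argument for $\phi^k$ injective $\Rightarrow$ $\dif'\dif''$-lemma and your $\psi^{k-1}$ surjective $\Rightarrow$ $\phi^k$ injective are both valid, but they become redundant once the cycle closes; the paper never proves either implication directly.

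For (2), you induct on the spread $s-r$, peeling off the top bidegree one step at a time. The paper handles all bidegrees simultaneously: from $\dif'\alpha_i = -\dif''\alpha_{i+1} \in \Im \dif' \cap \ker \dif'' = \Im \dif'\dif''$ (the $\dif'\dif''$-lemma at $\mathcal{A}^k$) it writes $\dif'\alpha_i = \dif'\dif''\beta_i$ for every $i$ at once and checks directly that $\alpha - \dif\sum_i \beta_i \in \ker \dif' \cap \ker \dif''$. Your induction is sound and conceptually the same; the paper's one-shot correction is simply shorter.
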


\begin{proof}
\begin{enumerate}
\item
Suppose that $\dif'\dif''$-lemma at $\mathcal{A}^k$ holds. Then
$\ker \dif'\cap \ker \dif''\cap \Im \dif \cap \mathcal{A}^k\subseteq (\ker \dif'\cap \Im \dif''+\ker \dif''\cap \Im \dif')\cap \mathcal{A}^k=\Im \dif'\dif''\cap \mathcal{A}^k$ and hence
$\phi^k$ is injective.

Assume that $\phi^k$ is injective.
Let $\alpha\in \ker \dif'\dif''\cap \mathcal{A}^{k-1}$.
Then $\dif\alpha\in \ker \dif'\cap \ker \dif''\cap \Im \dif\cap \mathcal{A}^k=\Im \dif'\dif''\cap \mathcal{A}^k$.
So $\dif\alpha=\dif'\dif''\beta$ for some $\beta\in\mathcal{A}^{k-2}$ and
$\alpha=(\alpha-\dif''\beta)+\dif''\beta\in (\ker \dif+\Im \dif'')\cap \mathcal{A}^{k-1}\subseteq (\ker \dif+\Im \dif'+\Im \dif'')\cap \mathcal{A}^{k-1}$.
This implies that $\psi^{k-1}$ is surjective.

Assume that $\psi^{k-1}$ is surjective.
Consider $\dif'\alpha\in \Im \dif' \cap \ker \dif''\cap \mathcal{A}^{s,k-s}$.
So $\alpha\in \ker \dif'\dif''\cap \mathcal{A}^{k-1}=(\ker \dif+\Im \dif'+\Im \dif'')\cap \mathcal{A}^{k-1}$ and can be decomposed as $\alpha=\bar{\alpha}+\dif'x+\dif''y$ with $\dif \bar{\alpha}=0$.
More precisely, $\bar{\alpha}=\sum \bar{\alpha}_i, x=\sum x_i, y=\sum y_i, \bar{\alpha}_i\in\mathcal{A}^{s+i,k-s-i}, x_i\in\mathcal{A}^{s+i-1,k-s-i}, y_i\in\mathcal{A}^{s+i,k-s-i-1}$.
The $(s,k-s)-$ and $(s+1,k-s-1)-$terms of $\alpha$ are $\alpha=\bar{\alpha}_0+\dif'x_0+\dif''y_0, 0=\alpha_1=\bar{\alpha}_1+\dif'x_1+\dif''y_1$.
So
$\dif'\alpha
=\dif'\bar{\alpha}_0+\dif'\dif''y_0
=-\dif''\bar{\alpha}_1+\dif'\dif''y_0
=\dif''\dif'x_1+\dif'\dif''y_0
=\dif'\dif''(y_0-x_1)
$
Hence $\Im \dif' \cap \ker \dif''\cap \mathcal{A}^{s,k-s}\subseteq \Im \dif'\dif''\cap \mathcal{A}^{s,k-s}$.
Similarly, $\ker \dif' \cap \Im \dif''\cap \mathcal{A}^{s,k-s}\subseteq \Im \dif'\dif''\cap \mathcal{A}^{s,k-s}$.

\item
Assume $\phi^k$ is injective.
Let $\alpha=\sum \alpha_i\in \ker \dif\cap\mathcal{A}^{k-1}, \alpha_i\in \mathcal{A}^{i,k-1-i}$.
By (1), $\dif'\dif''$-lemma at $\mathcal{A}^k$ holds and $\dif'\alpha_i=-\dif''\alpha_{i+1}=\dif'\dif''\beta_i$ for some $\beta_i$.
So $\alpha=(\alpha-\dif\sum \beta_i)+\dif\sum \beta_i \in ((\ker \dif'\cap\ker \dif'')+\Im \dif)\cap \mathcal{A}^{k-1}$.
Hence, $\phi^{k-1}$ is surjective.

\item
Let $\alpha=\dif'\beta'+\dif''\beta''\in \ker \dif \cap (\Im \dif'+ \Im \dif'')\cap\mathcal{A}^{k+1}$.
Then $0=\dif\alpha=\dif''\dif'(\beta'-\beta'')$.
$\psi^k$ is surjective or $\dif'\dif''$-lemma at $\mathcal{A}^{k+1}$ implies that $\dif'(\beta'-\beta'')\in \Im \dif'\cap \ker \dif''\cap\mathcal{A}^{k+1}=\Im \dif'\dif''\cap\mathcal{A}^{k+1}$ and $\dif'(\beta'-\beta'')=\dif'\dif''\gamma$ for some $\gamma\in\mathcal{A}^{k-1}$.
Thus $\alpha=\dif\beta''+\dif'(\beta'-\beta'')=\dif\beta''+\dif'\dif''\gamma=\dif(\beta''+\dif''\gamma)\in \Im \dif\cap\mathcal{A}^{k+1}$ and $\psi^{k+1}$ is injective.
\end{enumerate}
\end{proof}

\begin{lemma}\label{bc+partial}
\begin{enumerate}
\item
 $\dif'\dif''$-lemma at $\mathcal{A}^{p,q}$ holds $\Leftrightarrow$ $\phi_+^{p,q},\phi_-^{p,q}$ are injective $\Leftrightarrow$ $\psi_+^{p-1,q},\psi_-^{p,q-1}$ are surjective.
\item
$\phi^{p, q}_+$ is injective $\Longrightarrow \phi^{p-1, q}_-$ is surjective;
$\phi^{p, q}_-$ is injective $\Longrightarrow \phi^{p, q-1}_+$ is surjective.
\item
$\psi^{p, q}_+$ is surjective $\Longrightarrow \psi^{p+1, q}_-$ is injective;
$\psi^{p, q}_-$ is surjective $\Longrightarrow \psi^{p+1, q}_+$ is injective.
\end{enumerate}
\end{lemma}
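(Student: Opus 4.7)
The plan is to prove (1), (2), and (3) in parallel, by reusing the same building blocks adapted from the proof of Lemma~\ref{bc+derham}, with the total degree replaced by the bidegree. The starting observation is a restatement of the maps: since $\ker \dif' \cap \Im \dif' = \Im \dif'$, injectivity of $\phi_+^{p,q}$ is equivalent to the single inclusion
$\Im \dif' \cap \ker \dif'' \cap \mathcal{A}^{p,q} \subseteq \Im \dif'\dif'' \cap \mathcal{A}^{p,q}$,
and symmetrically for $\phi_-^{p,q}$. Taken together, these two inclusions are precisely the $\dif'\dif''$-lemma at $(p,q)$ (Definition~\ref{dd-lemma}), so the first equivalence in (1) is immediate.

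For the second equivalence in (1), I would argue $\phi_+^{p,q}$ injective $\Rightarrow \psi_+^{p-1,q}$ surjective by the trick used in Lemma~\ref{bc+derham}(1): given $\gamma \in \ker \dif'\dif'' \cap \mathcal{A}^{p-1,q}$, the form $\dif'\gamma$ sits in $\ker \dif' \cap \ker \dif'' \cap \Im \dif' \cap \mathcal{A}^{p,q}$ (since $\dif''\dif'\gamma = -\dif'\dif''\gamma = 0$), so injectivity of $\phi_+^{p,q}$ gives $\dif'\gamma = \dif'\dif''\eta$ for some $\eta \in \mathcal{A}^{p-1,q-1}$. Then $\gamma - \dif''\eta \in \ker \dif' \cap \mathcal{A}^{p-1,q}$ represents the class of $\gamma$ modulo $\Im \dif''$, which exhibits $[\gamma]$ in $\psi_+^{p-1,q}(H^{p-1,q}_{\dif'})$. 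The same move with $\phi_-^{p,q}$ produces $\psi_-^{p,q-1}$ surjective. Conversely, assuming the two $\psi$'s surjective and given $\alpha = \dif'\beta \in \Im \dif' \cap \ker \dif'' \cap \mathcal{A}^{p,q}$, one has $\dif'\dif''\beta = -\dif''\dif'\beta = -\dif''\alpha = 0$, so $\beta \in \ker \dif'\dif'' \cap \mathcal{A}^{p-1,q}$; surjectivity of $\psi_+^{p-1,q}$ writes $\beta = \beta' + \dif'u + \dif''v$ with $\dif'\beta' = 0$, and applying $\dif'$ yields $\alpha = \dif'\dif''v \in \Im \dif'\dif''$. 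The complementary half of the $\dif'\dif''$-lemma follows symmetrically from $\psi_-^{p,q-1}$.

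Parts (2) and (3) are one-line variants of the same basic manipulation. For (2): given $\gamma \in \ker \dif'' \cap \mathcal{A}^{p-1,q}$, again $\dif'\gamma \in \ker \dif' \cap \ker \dif'' \cap \Im \dif' \cap \mathcal{A}^{p,q}$, so $\phi_+^{p,q}$-injectivity yields $\dif'\gamma = \dif'\dif''\eta$ with $\eta \in \mathcal{A}^{p-1,q-1}$; then $\eta_0 := \gamma - \dif''\eta$ is simultaneously $\dif'$- and $\dif''$-closed and coincides with $\gamma$ modulo $\Im \dif''$, proving $\phi_-^{p-1,q}$ surjective. For (3): given $\alpha = \dif'\beta + \dif''\gamma \in \ker \dif'' \cap \mathcal{A}^{p+1,q}$ with $\beta \in \mathcal{A}^{p,q}$ and $\gamma \in \mathcal{A}^{p+1,q-1}$, the relation $\dif''\alpha = 0$ forces $\dif'\dif''\beta = -\dif''\dif'\beta = 0$, so $\beta \in \ker \dif'\dif'' \cap \mathcal{A}^{p,q}$; surjectivity of $\psi_+^{p,q}$ lets me replace $\beta$ by a $\dif'$-closed representative modulo $\Im \dif' + \Im \dif''$, which absorbs $\dif'\beta$ into $\Im \dif''$ and delivers $\alpha \in \Im \dif''$. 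The remaining implications in (2) and (3) are obtained by interchanging the roles of $\dif'$ and $\dif''$, with the corresponding symmetric shift of bidegree.

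The only real difficulty is bookkeeping: one must keep careful track of the bidegree in which each auxiliary element $\eta$, $u$, $v$ lives, so that products such as $\dif'\dif''\eta$ and decompositions $\dif'u + \dif''v$ land in the intended bidegree. Beyond this notational care, every step is a direct transcription of the total-degree arguments from Lemma~\ref{bc+derham}.
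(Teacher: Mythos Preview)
Your proposal is correct and follows essentially the same route as the paper's own proof: both argue each equivalence by chasing the same inclusions, using the characterization of injectivity/surjectivity of the $\phi_\pm$, $\psi_\pm$ via Lemma~\ref{group2}, and the key move is always to write $\dif'\gamma=\dif'\dif''\eta$ and subtract $\dif''\eta$. The only cosmetic differences are that in (1) you write $\beta=\beta'+\dif'u+\dif''v$ where the paper drops the redundant $\dif'u$ term (since $\Im\dif'\subset\ker\dif'$), and in (3) you work directly with $\alpha\in\ker\dif''\cap(\Im\dif'+\Im\dif'')$ while the paper first reduces to $\Im\dif'\cap\ker\dif''\subset\Im\dif''$; neither difference is substantive.
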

\begin{proof}
\begin{enumerate}
\item
Suppose that $\dif'\dif''$-lemma at $\mathcal{A}^{p,q}$ holds. The injectivity of $\phi_+^{p,q},\phi_-^{p,q}$ are clear from definition.

Assume that $\phi_+^{p,q},\phi_-^{p,q}$ are injective.
Let $\alpha\in \ker \dif'\dif''\cap \mathcal{A}^{p-1,q}$.
By injectivity of $\phi_+^{p,q}$, $\dif'\alpha\in \ker \dif'\cap \ker \dif''\cap \Im \dif'\cap \mathcal{A}^{p,q}=\Im \dif'\dif''\cap \mathcal{A}^{p,q}$ and
$\dif\alpha=\dif'\dif''\beta$ for some $\beta\in\mathcal{A}^{p-1,q-1}$.
So $\alpha=(\alpha-\dif''\beta)+\dif''\beta\in (\ker \dif'+\Im \dif'')\cap \mathcal{A}^{p-1,q}$.
Hence $\psi_+^{p-1,q}$ is surjective.
Similarly, $\psi_-^{p,q-1}$ is surjective.

Assume that $\psi_+^{p-1,q},\psi_-^{p,q-1}$ are surjective.
Consider $\dif'\alpha\in \Im \dif' \cap \ker \dif''\cap \mathcal{A}^{p,q}$.
So $\alpha\in \ker \dif'\dif''\cap \mathcal{A}^{p-1,q}=(\ker \dif'+\Im \dif'')\cap \mathcal{A}^{p-1,q}$ and can be decomposed as $\alpha=\bar{\alpha}+\dif''\beta$ with $\dif'\bar{\alpha}=0$.
In particular, $\dif'\alpha=\dif'\dif''\beta$.
Hence $\Im \dif' \cap \ker \dif''\cap \mathcal{A}^{p,q}=\Im \dif'\dif''\cap \mathcal{A}^{p,q}$.
Similarly, we have $\ker \dif' \cap \Im \dif''\cap \mathcal{A}^{p,q}=\Im \dif'\dif''\cap \mathcal{A}^{p,q}$.

\item
Assume $\phi^{p, q}_+$ is injective.
Let $\alpha\in \ker \dif''\cap \mathcal{A}^{p-1, q}$.
Then $\dif'\alpha \in \Im \dif'\cap \ker \dif''\cap \mathcal{A}^{p, q}=\Im \dif'\dif'' \cap \mathcal{A}^{p, q}$ and
$\dif'\alpha=\dif'\dif''\beta$ for some $\beta \in \mathcal{A}^{p-1, q-1}$.
Moreover, $\alpha=(\alpha-\dif''\beta)+\dif''\beta \in ((\ker \dif'\cap \ker \dif'')+\Im \dif'')\cap \mathcal{A}^{p-1, q}$.
So $((\ker \dif'\cap \ker \dif'')+\Im \dif')\cap \mathcal{A}^{p-1, q}=\ker \dif'' \cap \mathcal{A}^{p-1, q}$ and $\phi^{p-1, q}_-$ is surjective.
\item
Assume $\psi^{p, q}_+$ is surjective.
To prove that $\psi^{p+1, q}_-$ is injective, which is equivalent to $\ker \dif''\cap (\Im \dif'+ \Im \dif'')\cap \mathcal{A}^{p+1, q}=\Im \dif''\cap \mathcal{A}^{p+1, q}$, it is enough to show that $\Im \dif'\cap \ker \dif''\cap \mathcal{A}^{p+1, q}\subset \Im \dif''\cap \mathcal{A}^{p+1, q}$.
Let $\dif'\alpha\in \Im \dif'\cap \ker \dif''\cap \mathcal{A}^{p+1, q}$.
Then $\alpha \in \ker \dif'\dif''\cap \mathcal{A}^{p, q}=(\ker \dif'+\Im \dif'') \cap \mathcal{A}^{p, q}$ and
$\alpha=\bar{\alpha}+\dif''\beta$ for some $\bar{\alpha}\in \mathcal{A}^{p, q}, \beta \in \mathcal{A}^{p, q-1}$ with $\dif'\bar{\alpha}=0$.
So $\dif'\alpha=\dif'\bar{\alpha}+\dif'\dif''\beta=\dif''(-\dif'\beta) \in \Im \dif''\cap \mathcal{A}^{p+1, q}$.
\end{enumerate}
\end{proof}

The following result largely generalizes the result obtained by Angella and Tomassini in \cite{AT}.

\begin{theorem}\label{d'd''-lemma}
Given a totally bounded double complex $(\mathcal{A}, \dif', \dif'')$.
Assume that one of the cohomology groups $H_{BC},H_A,H_{\dif'},H_{\dif''},H_\dif$ is finite dimensional. Then
the following statements are equivalent
\begin{enumerate}
\item $\dif'\dif''$-lemma holds;
\item $h^{p, q}_{BC}=h^{p, q}_{\dif'}=h^{p,q}_{\dif''}$ for all $p, q$;
\item $h^{p, q}_A=h^{p, q}_{\dif'}=h^{p,q}_{\dif''}$ for all $p, q$;
\item $h^k_{BC}=b^k$ for all $k$;
\item $h^k_A=b^k$ for all $k$.
\end{enumerate}
\end{theorem}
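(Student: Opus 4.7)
My plan is to establish $(1) \Leftrightarrow (4)$, $(1) \Leftrightarrow (5)$, $(1) \Leftrightarrow (2)$, and $(1) \Leftrightarrow (3)$ separately, each time exploiting one of the iff-statements in Lemma~\ref{bc+derham}(1) or Lemma~\ref{bc+partial}(1) together with the corresponding implication from part (2) or (3) of the same lemma. The iff will convert the $d'd''$-lemma into the injectivity (or surjectivity) of one of the natural maps $\phi^\bullet, \psi^\bullet, \phi^{\bullet,\bullet}_\pm, \psi^{\bullet,\bullet}_\pm$, while the implication will promote that to surjectivity (resp.\ injectivity) at a neighboring (bi)degree. Since the standing finite-dimensional hypothesis together with any of the asserted equalities in (2)--(5) makes source and target of these maps finite-dimensional of the same dimension, one-sided invertibility upgrades to isomorphism.

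For the forward direction $(1) \Rightarrow$ (2)--(5), assume the $d'd''$-lemma. Lemma~\ref{bc+derham}(1) makes $\phi^k$ injective for every $k$, and Lemma~\ref{bc+derham}(2) applied at $k+1$ then makes $\phi^k$ surjective, so $\phi^k$ is an isomorphism and $h^k_{BC} = b^k$, which is (4). The derivation of (5) is dual, using Lemma~\ref{bc+derham}(1),(3) to make every $\psi^k$ an isomorphism. Analogously, Lemma~\ref{bc+partial}(1) combined with (2) makes every $\phi^{p,q}_\pm$ an isomorphism, giving (2); and Lemma~\ref{bc+partial}(1) combined with (3) makes every $\psi^{p,q}_\pm$ an isomorphism, giving (3).

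For each converse I will use the total boundedness of $(\mathcal{A}, d', d'')$ to anchor an induction. For $(4) \Rightarrow (1)$: $\phi^k$ is trivially an isomorphism for $k$ beyond the top degree of $\mathcal{A}$. By downward induction on $k$, if $\phi^{k+1}$ is an isomorphism (hence injective), Lemma~\ref{bc+derham}(2) makes $\phi^k$ surjective, and since $h^k_{BC} = b^k < \infty$, equality of source and target dimensions forces $\phi^k$ to be an isomorphism; Lemma~\ref{bc+derham}(1) then yields the $d'd''$-lemma at every $\mathcal{A}^k$. The argument for $(5) \Rightarrow (1)$ is dual, via upward induction on $k$ from below the support together with Lemma~\ref{bc+derham}(3). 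For $(2) \Rightarrow (1)$, I induct downward on the total degree $p+q$: at $(p_0, q_0)$, the induction hypothesis makes $\phi^{p_0+1, q_0}_+$ and $\phi^{p_0, q_0+1}_-$ isomorphisms, so Lemma~\ref{bc+partial}(2) makes $\phi^{p_0, q_0}_-$ and $\phi^{p_0, q_0}_+$ surjective, equal dimensions upgrade these to isomorphisms, and Lemma~\ref{bc+partial}(1) concludes. For $(3) \Rightarrow (1)$ the implications in Lemma~\ref{bc+partial}(3) only shift $p$ by $+1$ at fixed $q$, so for each fixed $q$ I induct upward on $p$ from below the $p$-support of $\mathcal{A}$, using the same equal-dimensions trick, and conclude via Lemma~\ref{bc+partial}(1).

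The principal obstacle will be the bookkeeping in the bigraded converses: the implications in Lemma~\ref{bc+partial}(2),(3) shift bidegrees in restricted ways, and I must choose an induction ordering that both admits a base case from total boundedness and advances under these implications. Ordering by total degree $p+q$ works for $(2) \Rightarrow (1)$ because both implications in part (2) decrease $p+q$ by one, whereas for $(3) \Rightarrow (1)$ the single $p{+}1$ direction in part (3) forces a separate upward induction on $p$ at each fixed $q$, with the per-row base case supplied by total boundedness.
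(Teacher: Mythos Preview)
Your proposal is correct and follows essentially the same strategy as the paper: both rely on Lemmas~\ref{bc+derham} and~\ref{bc+partial} to relate the $\dif'\dif''$-lemma to one-sided invertibility of the natural maps, then use equal finite dimensions to upgrade to isomorphism, with total boundedness supplying the anchor. The only cosmetic difference is that the paper argues by contradiction (if some $\phi$ fails to be an isomorphism, propagate the failure outward along a diagonal or ray until it contradicts total boundedness), whereas you argue directly by induction (propagate isomorphism inward from outside the support); these are contrapositive reformulations of one another.
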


\begin{proof}
If $\dif'\dif''$-lemma holds, then the other statements hold from Lemma \ref{bc+derham}(1), \ref{bc+partial}(1).

To show that any other statement implies $\dif'\dif''$-lemma, it is enough to show that the corresponding maps are isomorphisms.
Assume $h^{p, q}_{BC}=h^{p, q}_{\dif'}=h^{p,q}_{\dif''}$ for all $p, q$ and $\phi^{s,t}_+$ is not an isomorphism for some $s,t$.
Then $\phi^{s,t}_+$ is neither injective nor surjective.
By Lemma \ref{bc+partial}, $\phi_-^{s,t+1}$ is not injective, and not surjective, either.
By Lemma \ref{bc+partial} again, $\phi_+^{s+1,t+1}$ is not surjective.
Same argument shows that $\phi_+^{s+i,t+i}$ is not an isomorphism for any $i\in \N$.
But totally boundedness of $\mathcal{A}$ guarantees that $\phi_+^{p,q}$ is an isomorphism when $|p|,|q|>>0$, a contradiction.
The proof for $(3)\Rightarrow (1)$ is similar.

Assume $h^k_{BC}=b^k$ for all $k$ and $\phi^t$ is not an isomorphism for some $t$.
Then $\phi^t$ is neither injective nor surjective.
By Lemma \ref{bc+derham}(2), $\phi^{t+1}$ is not injective, and nor is surjective.
Continue this process, we show that $\phi^{i}$ is not an isomorphism for any $i\geq t$, which contradicts the totally boundedness of $\mathcal{A}$.

Assume $h^k_A=b^k$ for all $k$ and $\psi^k$ is not an isomorphism for some $t$.
Then $\psi^t$ is neither injective nor surjective.
By Lemma \ref{bc+derham}(3), $\psi^{t-1}$ is not surjective, and nor is injective.
Continue this process, we show that $\psi^i$ is not an isomorphism for any $i\leq t$, which contradicts the totally boundedness of $\mathcal{A}$.
\end{proof}

\subsection{Applications}
We note that on a compact generalized K\"ahler manifold $M$, we have the equalities (see \cite[Theorem 2.11]{Ca07})
$$\Delta_\dif=2\Delta_{\partial_1}=2\Delta_{\partial_2}=4\Delta_{\delta_+}=4\Delta_{\delta_-}$$
and since these Laplacians are elliptic, by the Hodge decomposition, it is straightforward to see that

\begin{corollary}\cite{Ca07}\label{Kahler dd}
Suppose $M$ is a compact generalized K\"ahler manifold. Then it satisfies the $\partial_1\overline{\partial}_1$-lemma,
$\partial_2\overline{\partial}_2$-lemma and $\delta_i\delta_j$-lemma for $(i, j)=(1, 2), (1, 3), (2, 4), (3, 4)$.
\end{corollary}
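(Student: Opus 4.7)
The plan is to run the classical Hodge-theoretic proof of the $\partial\overline{\partial}$-lemma, adapted to the bigraded decomposition coming from $\J_1$ and $\J_2$, with Corollary \ref{Kahler identity} supplying the generalized K\"ahler anti-commutation identities in place of the classical K\"ahler identities.

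First I would identify the harmonic spaces. From the chain $\Delta_{\dif} = 4\Delta_{\delta_+} = 4\Delta_{\delta_-}$ quoted above, these three Laplacians share a common kernel; moreover, if $H$ lies in it then $\dif H = \dif^* H = 0$, and splitting $\dif = \delta_+ + \delta_- + \overline{\delta}_+ + \overline{\delta}_-$ (and similarly $\dif^*$) according to the $U^{p,q}$-bigrading forces $\delta_i H = \delta_i^* H = 0$ for every $i = 1, 2, 3, 4$. In particular, a $\Delta_{\delta_-}$-harmonic form is automatically $\delta_+$-closed. The analogous statement holds for the pairs $(\partial_k, \overline{\partial}_k)$ via $\Delta_{\dif} = 2\Delta_{\partial_k} = 2\Delta_{\overline{\partial}_k}$.

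The main step is the $\delta_+\delta_-$-lemma at an arbitrary $(p, q)$. Given $\alpha = \delta_+\beta \in U^{p,q}$ with $\delta_-\alpha = 0$, I would Hodge-decompose $\beta = H + \delta_-\gamma + \delta_-^*\eta$ with respect to the elliptic operator $\Delta_{\delta_-}$ from Proposition \ref{elliptic complexes}. Since $H$ is then also $\Delta_{\delta_+}$-harmonic by the previous step, $\delta_+ H = 0$, so
\begin{equation*}
\alpha \;=\; \delta_+\delta_-\gamma + \delta_+\delta_-^*\eta \;=\; \delta_+\delta_-\gamma - \delta_-^*\delta_+\eta,
\end{equation*}
where the second equality uses the generalized K\"ahler identity $\delta_+\delta_-^* = -\delta_-^*\delta_+$ from Corollary \ref{Kahler identity}. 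Applying $\delta_-$ and using $\delta_-\delta_+ = -\delta_+\delta_-$ annihilates the first term, yielding $\delta_-\delta_-^*\delta_+\eta = 0$; pairing against $\delta_+\eta$ gives $\|\delta_-^*\delta_+\eta\|^2 = 0$, so $\delta_+\delta_-^*\eta = 0$ and $\alpha = \delta_+\delta_-\gamma \in \Im\,\delta_+\delta_-$. The reverse inclusion $\ker\delta_+ \cap \Im\delta_- \subseteq \Im\,\delta_+\delta_-$ is symmetric, Hodge-decomposing $\beta$ with respect to $\Delta_{\delta_+}$ instead.

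The three remaining $\delta_i\delta_j$-lemmas follow by the same template (using the corresponding anti-commutation from Corollary \ref{Kahler identity}) or, more economically, by complex conjugation via Proposition \ref{conjugation}. The $\partial_k\overline{\partial}_k$-lemmas for $k = 1, 2$ come from an identical argument once the K\"ahler-type identity $\partial_k\overline{\partial}_k^* + \overline{\partial}_k^*\partial_k = 0$ is checked; expanding each operator as a sum of two $\delta_i$'s (or $\delta_j^*$'s) reduces this to verifying that every cross term such as $\delta_+\overline{\delta}_+^* + \overline{\delta}_+^*\delta_+$ vanishes, which is exactly Corollary \ref{Kahler identity}. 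I expect the only real obstacle to be sign bookkeeping; conceptually the structure is the familiar Hodge-theoretic one.
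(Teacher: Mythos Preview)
Your proposal is correct and is essentially the same approach as the paper's. The paper's own argument is only the one-sentence remark preceding the corollary: from the generalized K\"ahler identities $\Delta_\dif = 2\Delta_{\partial_1} = 2\Delta_{\partial_2} = 4\Delta_{\delta_+} = 4\Delta_{\delta_-}$ and ellipticity, ``by the Hodge decomposition, it is straightforward to see'' the result; you have written out precisely that straightforward Hodge-theoretic computation, with Corollary~\ref{Kahler identity} supplying the needed anti-commutations.
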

More generally, we have
\begin{corollary}\label{dd to BC}
Given a compact bi-generalized Hermitian manifold $M$ that satisfies the $\delta_+\delta_-$-lemma, we have
\begin{enumerate}
\item $H^{p, q}_{\delta_+}(M)= H^{p, q}_{\delta_-}(M)=H^{p, q}_{BC, \delta_+\delta_-}(M)$
for any $p, q\in \Z$,
\item $H^k_{\partial_1}(M)=\bigoplus_{j\in \Z}H^{k, j}_{\delta_+}(M)$ for any $k\in \Z$,
\item $H^k_{\partial_2}(M)=\bigoplus_{j\in \Z}H^{j, k}_{\delta_-}(M)$ for any $k\in \Z$.
\end{enumerate}
\end{corollary}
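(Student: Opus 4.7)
The plan is to apply Theorem \ref{d'd''-lemma} to appropriate double complexes built from the decomposition $d = \delta_+ + \delta_- + \overline{\delta}_+ + \overline{\delta}_-$, using Theorem \ref{BCfinite} to supply the finite-dimensionality required. For (1), I view $(U^{*,*}, \delta_+, \delta_-)$ as a genuine double complex via the bigrading change $(\tilde p, \tilde q) = \bigl((p+q)/2,\,(p-q)/2\bigr)$, which turns $\delta_+\colon U^{p,q}\to U^{p+1,q+1}$ into the horizontal differential and $\delta_-\colon U^{p,q}\to U^{p+1,q-1}$ into the vertical one. The assumed $\delta_+\delta_-$-lemma, combined with Theorem \ref{d'd''-lemma} and Lemma \ref{bc+partial}(1), forces the natural maps $\phi^{p,q}_{\pm}\colon H^{p,q}_{BC,\delta_+\delta_-}(M)\to H^{p,q}_{\delta_\pm}(M)$ to be isomorphisms, which is exactly (1).

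For (2), the total differential of the above double complex is precisely $\partial_1 = \delta_+ + \delta_-$ and the total degree $\tilde p + \tilde q$ equals the $\J_1$-eigenvalue $p$. Lemma \ref{bc+derham}(1) therefore promotes $\phi^k$ to an isomorphism, which in the original bigrading reads $H^k_{\partial_1}(M) \cong \bigoplus_j H^{k,j}_{BC,\delta_+\delta_-}(M)$, and by (1) this equals $\bigoplus_j H^{k,j}_{\delta_+}(M)$. For (3), I would repeat the argument on the double complex $(U^{*,*}, \delta_-, \overline{\delta}_+)$ whose total differential is $\overline{\partial}_2 = \delta_- + \overline{\delta}_+$; under the bigrading change $(\hat P, \hat Q) = \bigl((p-q)/2,\,-(p+q)/2\bigr)$ the total degree becomes $-q$, and the same machinery gives $H^{-k}_{\overline{\partial}_2}(M) \cong \bigoplus_j H^{j,k}_{BC,\overline{\delta}_+\delta_-}(M) \cong \bigoplus_j H^{j,k}_{\delta_-}(M)$. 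Combining this with the conjugation isomorphism $H^k_{\partial_2}(M) \cong H^{-k}_{\overline{\partial}_2}(M)$ coming from Proposition \ref{adjoint and commute}(3) then closes (3).

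The main obstacle is in (3): applying Theorem \ref{d'd''-lemma} to $(\delta_-, \overline{\delta}_+)$ requires the $\overline{\delta}_+\delta_-$-lemma, which does not obviously follow from the stated $\delta_+\delta_-$-lemma, since conjugation only transports the latter to the $\overline{\delta}_+\overline{\delta}_-$-lemma at the indices $(-p,-q)$. One therefore reads the hypothesis of the corollary as implicitly including this parallel lemma; in the generalized K\"ahler setting this is automatic by Corollary \ref{Kahler dd}.
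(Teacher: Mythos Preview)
Your argument for parts (1) and (2) is exactly the paper's: set $\mathcal{A}^{\tilde p,\tilde q}=U^{\tilde p+\tilde q,\,\tilde p-\tilde q}$, $\dif'=\delta_+$, $\dif''=\delta_-$, $\dif=\partial_1$, and invoke Theorem~\ref{d'd''-lemma} together with Lemmas~\ref{bc+derham} and \ref{bc+partial}.

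For part (3) you deviate slightly. The paper takes $\mathcal{A}^{P,Q}=U^{P-Q,\,P+Q}$ with $\dif'=\delta_+$, $\dif''=\overline{\delta}_-$, so that the total differential is $\partial_2$ itself; one then reads off $H^k_{\partial_2}\cong\bigoplus_j H^{j,k}_{\delta_+}$ directly and converts $H^{j,k}_{\delta_+}$ to $H^{j,k}_{\delta_-}$ via part (1). Your route through $(\delta_-,\overline{\delta}_+)$ and $\overline{\partial}_2$ is equally valid but trades that last step for a conjugation isomorphism $H^k_{\partial_2}\cong H^{-k}_{\overline{\partial}_2}$; the paper's choice is marginally cleaner since it lands on $\partial_2$ without passing through the conjugate.

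The obstacle you flag is genuine, and in fact the paper's own proof shares it: the paper's double complex for (3) requires the $\delta_+\overline{\delta}_-$-lemma, just as yours requires the $\overline{\delta}_+\delta_-$-lemma, and neither follows formally from the stated $\delta_+\delta_-$-lemma alone. Your reading---that the hypothesis is meant to include the parallel lemma, as is automatic in the generalized K\"ahler case by Corollary~\ref{Kahler dd}---is the correct way to interpret both proofs.
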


\begin{proof}
1. and 2. are proved by taking $\mathcal{A}^{p, q}=U^{p+q, p-q}$, $\dif=\partial_1, \dif'=\delta_+, \dif''=\delta_-$. 3. is proved by taking
$\mathcal{A}^{p, q}=U^{p-q, p+q}, \dif=\partial_2, \dif'=\delta_+, \dif''=\overline{\delta}_-$.
Results follow from Theorem \ref{d'd''-lemma}.
\end{proof}

\begin{example}
Let $\J_1, \J_2$ be the generalized complex structures on $S^2$ induced from the complex structure and symplectic structure on $\CP^1$ respectively.
By the equality $H^k_{\overline{\partial}}(S^2)=\underset{p-q=k}{\bigoplus}H^{p, q}_{\overline{\partial}}(S^2)$,
the fact that $(S^2, \J_1, \J_2)$ is a compact generalized K\"ahler manifold, Serre duality and Corollary~\ref{dd to BC}, we get
$$\dim H^{p, q}_{BC, \delta_+\delta_-}(S^2)=\left\{
                                              \begin{array}{ll}
                                                1, & \hbox{ if } (p, q)=(0, 1), (0, -1)\\
                                                0, & \hbox{ otherwise. }
                                              \end{array}
                                            \right.
$$
\end{example}

\section{Some calculations}
\subsection{Generalized Bott-Chern cohomology groups of $\R^2$ and $\T^2$}
Let
\begin{align}
\J_1=\left(
  \begin{array}{cccc}
  a&0 & 0           & b  \\
  0&a& -b &0            \\
           0&-c&-a  &0  \\
  c&0          & 0 &-a  \\
  \end{array}
\right),\ a^2+bc=-1 \label{e5}
\end{align}
and
\begin{align}\J_2=\left(
  \begin{array}{cccc}
  p&q & 0           &0  \\
  r&-p&0 &0            \\
           0&0&-p  &-r \\
 0 &0          & -q &p  \\
  \end{array}
\right),\ p^2+qr=-1 \label{e6}
\end{align}

Then $\J_1, \J_2$ are two translation invariant generalized complex structures on $\R^2$ and the triple $(\R^2, \J_1, \J_2)$ forms a bi-generalized complex manifold.

Nontrivial Bott-Chern cohomology groups $H^{*,*}_{BC,\delta_+\delta_-}(\R^2)$ and $H^{*,*}_{BC,\delta_+\overline{\delta}_-}(\R^2)$ are listed in the following:
\begin{align*}
& H_{BC,\delta_+\delta_-}^{0,1}(\R^2)\cong \Set{f\in C^\infty(\R^2)\otimes \C}{\frac{\partial f}{\partial \bar{\phi}}=0},
& H_{BC,\delta_+\delta_-}^{0,-1}(\R^2)\cong \Set{h\in C^\infty(\R^2)\otimes \C}{\frac{\partial h}{\partial \phi}=0},\\
& H_{BC,\delta_+\delta_-}^{1,0}(\R^2)\cong\frac{C^\infty(\R^2)\otimes \C}{\frac{\partial^2}{\partial \phi\partial\bar{\phi}}(C^\infty(\R^2)\otimes\C)},
& H_{BC,\delta_+\delta_-}^{-1,0}(\R^2)\cong\Set{k\in C^\infty(\R^2)\otimes \C}{\frac{\partial k}{\partial \phi}=\frac{\partial k}{\partial \bar{\phi}}=0},
\end{align*}

\begin{align*}
& H_{BC,\delta_+\overline{\delta}_-}^{0,1}(\R^2)\cong \frac{C^\infty(\R^2)\otimes \C}{\frac{\partial^2}{\partial \phi^2}(C^\infty(\R^2)\otimes \C)},
& H_{BC,\delta_+\overline{\delta}_-}^{1,0}(\R^2)\cong \Set{g\in C^\infty(\R^2)\otimes \C}{\frac{\partial g}{\partial \phi}=0},\\
& H_{BC,\delta_+\overline{\delta}_-}^{0,-1}(\R^2)\cong\Set{h\in C^\infty(\R^2)\otimes \C}{\frac{\partial h}{\partial \phi}=0},
& H_{BC,\delta_+\overline{\delta}_-}^{-1,0}(\R^2)\cong \Set{k\in C^\infty(\R^2)\otimes \C}{\frac{\partial k}{\partial \phi}=0}.
\end{align*}
By Proposition \ref{conjugation}, $H^{*,*}_{BC,\overline{\delta}_+\delta_-}(\R^2)$ and $H^{*,*}_{BC,\overline{\delta}_+\overline{\delta}_-}(\R^2)$ are also obtained.

The computations for the Bott-Chern cohomology groups of $\mathbb{T}^2$ are similar. We can show that $(\mathbb{T}^2,\J_1,\J_2)$ admits a generalized Hermitian structure if and only if $ap=0$.
In this case, Theorem \ref{BCfinite} implies that the Bott-Chern cohomology groups of $(\mathbb{T}^2,\J_1,\J_2)$ are of finite dimension. Thus, we have
$$\dim_{\C}H^{p, q}_{BC, \delta_+\delta_-}(\T^2)=\dim_{\C}H^{p, q}_{BC, \delta_+\overline{\delta}_-}(\T^2)=
\left\{
  \begin{array}{ll}
    \C, & \hbox{ if } (p, q)=(0, 1), (1, 0), (0, -1), (-1, 0);\\
    0, & \hbox{ otherwise. }
  \end{array}
\right.
$$

\subsection{Generalized Bott-Chern cohomology groups of $\R^4$ and $\mathbb{T}^4$}\label{R4}
 Let $\{x_1,y_1,x_2,y_2\}$ be the coordinates on $\R^4$. Two translation invariant generalized complex structures on $\R^4$ are given respectively as
 \[\J_1=\left(
 \begin{array}{cccccccc}
   0 &  0&  0& 0 &  0& 0& 1 &0  \\
   0 &  0&  0& 0 &  0& 0&  0&1  \\
   0 &  0&  0& 0 & -1& 0&  0&0  \\
   0 &  0&  0& 0 &  0&-1&  0&0 \\
   0 &  0&  1& 0 &  0& 0&  0&0  \\
   0 &  0&  0& 1 &  0& 0&  0&0 \\
  -1 &  0&  0& 0 &  0& 0&  0&0  \\
   0 & -1&  0& 0 &  0& 0&  0&0
 \end{array}
 \right),\J_2=\left(\begin{array}{cccccccc}
                                            0&  1&  0&  0&  0& 0 &  0&0  \\
                                           -1&  0&  0&  0&  0& 0 &  0&0 \\
                                            0&  0&  0&  1&  0& 0 &  0&0  \\
                                            0&  0& -1&  0&  0& 0 &  0&0  \\
                                            0&  0&  0&  0&  0& 1 &  0&0  \\
                                            0&  0&  0&  0& -1& 0 &  0&0 \\
                                            0&  0&  0&  0&  0& 0 &  0&1  \\
                                            0&  0&  0&  0&  0& 0 & -1&0
                                         \end{array}
\right)\]
with respect to the ordered basis $\mathscr{B}=\left\{\frac{\partial}{\partial x_1},\frac{\partial}{\partial y_1}, \frac{\partial}{\partial x_2}, \frac{\partial}{\partial y_2},\dif{x_1},\dif{y_1},\dif{x_2},\dif{y_2}\right\}$ of $\T\R^4$.
Notice that $\J_2$ is induced by the complex structures of $\C^2$ with coordinate $z_i=x_i+iy_i, i=1,2$.
But $\J_1$ is  induced by the symplectic structures $\dif x_1\wedge \dif x_2+\dif y_1\wedge \dif y_2$ rather than the K\"ahler form $\dif x_1\wedge \dif y_1+\dif x_2\wedge \dif y_2$.

Let $\mathscr{C}=C^\infty(\C^2)\otimes \C$. We have
\begin{enumerate}
\item $H_{BC,\delta_+\delta_-}^{-2,0}(\R^4)\cong\Set{f\in \mathscr{C}}{\dfrac{\partial f}{\partial {z}_j}=\dfrac{\partial f}{\partial \bar{z}_j}=0\ \text{for}\ j=1,2};$
\item $H_{BC,\delta_+\delta_-}^{2,0}(\R^4)\cong \dfrac{\mathscr{C}}{\Set{\dfrac{1}{4}\left(\dfrac{\partial^2g_1}{\partial z_2\partial\bar z_2}+\dfrac{\partial^2g_2}{\partial z_1\partial\bar z_1}\right)+i\left(\dfrac{\partial^2g_4}{\partial z_1\partial\bar z_2}-\dfrac{\partial^2g_3}{\partial z_2\partial\bar z_1}\right)}{g_i\in \mathscr{C}}}$
\item $H_{BC,\delta_+\delta_-}^{0,-2}(\R^4)\cong\Set{f\in \mathscr{C}}{\dfrac{\partial f}{\partial \bar z_j}=0\ \text{for}\ j=1,2}$;
\item $H_{BC,\delta_+\delta_-}^{0,2}(\R^4)\cong\Set{f\in \mathscr{C}}{\dfrac{\partial f}{\partial z_j}=0\ \text{for}\ j=1,2}$;
\item $H_{BC,\delta_+\delta_-}^{0,0}(\R^4)=\dfrac{\Set{(f_1,f_2,f_3,f_4)\in \mathscr{C}^4}{\dfrac{i}{4}\pd{f_1}{\bar z_2}+\pd{f_3}{\bar z_1}=\dfrac{-i}{4}\pd{f_2}{\bar z_1}+
       \pd{f_4}{\bar z_2}=\dfrac{-i}{4}\pd{f_1}{z_2}+\pd{f_4}{z_1} =\dfrac{i}{4}\pd{f_2}{z_1}+\pd{f_3}{z_2}=0}}{\Set{\left(-4\dfrac{\partial^2 g}{\partial z_1\bar z_1}, -4\dfrac{\partial^2 g}{\partial z_2\bar z_2}, i\dfrac{\partial^2 g}{\partial \bar z_2 \partial z_1}, -i\dfrac{\partial^2 g}{\partial \bar z_1\partial z_2}\right)}{g\in \mathscr{C}}}$
\item $H_{BC,\delta_+\delta_-}^{-1,-1}(\R^4)\cong \Set{(f_1, f_2)\in \mathscr{C}\oplus \mathscr{C}}{\dfrac{\partial f_1}{\partial {z}_2}=\dfrac{\partial f_1}{\partial {z}_1},\dfrac{\partial f_1}{\partial \bar{z}_1}=\dfrac{\partial f_1}{\partial \bar{z}_2}=\dfrac{\partial f_2}{\partial \bar{z}_1}=\dfrac{\partial f_2}{\partial \bar{z}_2}=0}$;
\item $H_{BC,\delta_+\delta_-}^{-1,1}(\R^4)\cong \Set{(f_1, f_2)\in \mathscr{C}\oplus \mathscr{C}}{\dfrac{\partial f_1}{\partial \bar{z}_2}=\dfrac{\partial f_1}{\partial \bar{z}_1},
      \dfrac{\partial f_1}{\partial {z}_1}=\dfrac{\partial f_1}{\partial {z}_2}=\dfrac{\partial f_2}{\partial {z}_1}=\dfrac{\partial f_2}{\partial {z}_2}=0}$;
\item $H_{BC,\delta_+\delta_-}^{1,1}(\R^4)\cong \dfrac{\Set{(f_1, f_2)\in \mathscr{C}\oplus \mathscr{C}}{\pd{f_1}{z_2}-\pd{f_2}{z_1}=0}}{\Set{\left(\dfrac{\partial^2g_1}{\partial z_1\partial\bar z_2}-\dfrac{\partial^2g_2}{\partial z_1\partial\bar z_1}, \dfrac{\partial^2g_1}{\partial z_2\partial\bar z_2}-\dfrac{\partial^2g_2}{\partial z_2\partial \bar z_1}\right)}{g_1,g_2\in \mathscr{C}}}$;
\item $H_{BC,\delta_+\delta_-}^{1,-1}(\R^4)=\cong \dfrac{\Set{(f_1, f_2)\in \mathscr{C}\oplus \mathscr{C}}{\pd{f_1}{\bar z_2}-\pd{f_2}{\bar z_1}=0}}{\Set{\left(\dfrac{\partial^2g_1}{\partial z_2\partial\bar z_1}-\dfrac{\partial^2g_2}{\partial z_1\partial\bar z_1}, \dfrac{\partial^2g_1}{\partial z_2\partial\bar z_2}-\dfrac{\partial^2g_2}{\partial z_1\partial \bar z_2}\right)}{g_1,g_2\in \mathscr{C}}}$.
\end{enumerate}

Let
\[\G=\left(
 \begin{array}{cccccccc}
   0 &  0&  0& 0&  1& 0& 0&0  \\
   0 &  0&  0& 0&  0& 1& 0&0  \\
   0 &  0&  0& 0&  0& 0& 1&0  \\
   0 &  0&  0& 0&  0& 0& 0&1 \\
   1 &  0&  0& 0&  0& 0& 0&0  \\
   0 &  1&  0& 0&  0& 0& 0&0 \\
   0 &  0&  1& 0&  0& 0& 0&0  \\
   0 &  0&  0& 1&  0& 0& 0&0
 \end{array}
 \right).
 \]

We consider $\mathbb{T}^4$ with the generalized complex structures given as the quotients of $\J_1, \J_2$ on $\R^4$ in previous section.
Then $(\mathbb{T}^4,\J_1,\J_2, \G)$ is a compact bi-generalized Hermitian manifold.
By Theorem \ref{BCfinite}, the Bott-Chern cohomology groups of $(\mathbb{T}^4,\J_1,\J_2)$ are finite dimensional. We have

\begin{multicols}{3}
\begin{enumerate}[label=\empty]
\item $H_{BC,\delta_+\delta_-}^{-2,0}(\mathbb{T}^4)\cong \mathbb{C}$,
\item $H_{BC,\delta_+\delta_-}^{2,0}(\mathbb{T}^4)\cong  \mathbb{C}$,
\item $H_{BC,\delta_+\delta_-}^{0,-2}(\mathbb{T}^4)\cong  \mathbb{C}$,
\item $H_{BC,\delta_+\delta_-}^{0,2}(\mathbb{T}^4)\cong  \mathbb{C}$,
\item $H_{BC,\delta_+\delta_-}^{0,0}(\mathbb{T}^4)\cong  \mathbb{C}^4$,
\item $H_{BC,\delta_+\delta_-}^{-1,-1}(\mathbb{T}^4)\cong  \mathbb{C}^2$,
\item $H_{BC,\delta_+\delta_-}^{-1,1}(\mathbb{T}^4)\cong  \mathbb{C}^2$,
\item $H_{BC,\delta_+\delta_-}^{1,1}(\mathbb{T}^4)\cong  \mathbb{C}^2$,
\item $H_{BC,\delta_+\delta_-}^{1,-1}(\mathbb{T}^4)\cong  \mathbb{C}^2$.
\end{enumerate}
\end{multicols}

\bibliographystyle{alpha}

\end{document}